\documentclass[a4paper,11pt,reqno]{amsart}

\usepackage{url}
\usepackage[colorlinks=true, linkcolor=blue, citecolor=ForestGreen, urlcolor=blue]{hyperref}
\usepackage{cite}

\usepackage{latexsym,amsmath,amssymb,amsfonts,amsthm,stackengine}
\usepackage{mathrsfs}
\usepackage{mathtools}
\usepackage{dsfont}
\usepackage{bbm}
\usepackage{soul}
\usepackage{cancel}
\usepackage{esint}
\usepackage[utf8]{inputenc}

\usepackage[usenames, dvipsnames]{color}
\usepackage[usenames, dvipsnames, cmyk]{xcolor}
\usepackage{graphicx}
\usepackage{multirow}
\usepackage{enumerate}
\usepackage{enumitem}
\usepackage{subcaption}

\usepackage{a4wide}
\usepackage{epsfig,epstopdf}
\allowdisplaybreaks
\numberwithin{equation}{section}
\definecolor{grey}{rgb}{.7,.7,.7}
\definecolor{refkey}{gray}{.45}
\definecolor{labelkey}{gray}{.45}

\newtheorem{theorem}{Theorem}[section]
\newtheorem{proposition}[theorem]{Proposition}
\newtheorem{lemma}[theorem]{Lemma}

\newtheorem{corollary}[theorem]{Corollary}

\theoremstyle{remark}
\newtheorem{remark}[theorem]{Remark}
\theoremstyle{definition}

\newtheorem{example}[theorem]{Example}

\usepackage{tikz}
\usepackage{pgf,pgfplots}
\usetikzlibrary{calc}
\usetikzlibrary{decorations.markings}
\usetikzlibrary{decorations.pathmorphing}
\usetikzlibrary{decorations.shapes}
\usetikzlibrary{shapes,arrows,shapes.geometric,patterns,fadings}
\usetikzlibrary{arrows.meta,patterns.meta}
\usetikzlibrary{pgfplots.fillbetween}

\definecolor{bubbleblue}{RGB}{217,217,255}
\definecolor{bubbleorange}{RGB}{255,191,128}
\definecolor{labelgreen}{RGB}{0,89,0}
\definecolor{gridgreen}{RGB}{102,155,102}
\definecolor{hatchgray}{RGB}{209,209,209}

\newcommand{\e}{\varepsilon}
\newcommand{\vphi}{\varphi}

\newcommand{\N}{\mathbb N}

\newcommand{\R}{\mathbb R}

\newcommand{\dd}{\,\mathrm{d}}

\renewcommand{\setminus}{\backslash}

\newcommand{\defeq}{\coloneqq}

\newcommand{\E}{\mathcal{E}_q}

\newcommand{\en}{\mathcal{E}_q}
\newcommand{\nl}{\mathcal{D}}

\newcommand{\abs}[1]{\left\vert{#1}\right\vert}

\newcommand{\ba}{\begin{array}}
\newcommand{\ea}{\end{array}}

\newcommand{\tld}[1]{\widetilde{#1}}
\newcommand{\bthm}{\begin{theorem}}
\newcommand{\ethm}{\end{theorem}}
\newcommand{\bprop}{\begin{proposition}}
\newcommand{\eprop}{\end{proposition}}
\newcommand{\blemma}{\begin{lemma}}
\newcommand{\elemma}{\end{lemma}}
\newcommand{\bexmpl}{\begin{example}}
\newcommand{\eexmpl}{\end{example}}
\newcommand{\norm}[1]{\left\|#1\right\|}

\newcommand{\beqn}{\begin{equation}}
\newcommand{\eeqn}{\end{equation}}
\newcommand{\beqns}{\begin{equation*}}
\newcommand{\eeqns}{\end{equation*}}

\newcommand{\pt}{\partial}

\newcommand{\iarrow}{\hookrightarrow}

\newcommand{\ol}{\overline}

\newcommand{\Htwo}{\mathcal{H}^2}

\renewcommand{\leq}{\leqslant}
\renewcommand{\geq}{\geqslant}

\definecolor{mygreen}{rgb}{0.1,0.75,0.2}

\newcommand{\Om}{\Omega}

\newcommand{\etalchar}[1]{$^{#1}$}

\newcounter{myenumi}
\DeclareMathOperator{\id}{Id}

\DeclareMathOperator*{\dist}{dist}
\DeclareMathOperator{\loc}{loc}

\DeclareMathOperator{\capa}{cap}
\title[Minimality and stability for the charge-qubit energy]{Minimality and stability of the ball for the charge-qubit energy}

 \author{Ihsan Topaloglu}
 \address{Department of Mathematics and Statistics, Virginia Commonwealth University, Richmond, VA, United States}
 \email{iatopaloglu@vcu.edu}

\date{\today}

 \subjclass[2020]{49Q10, 35P15, 49R05, 35J10, 35Q40, 49K40}
 \keywords{Shape optimization; charge qubit; Hartree energy; quantitative Faber–Krahn inequality; ball rigidity; quantitative stability}

\begin{document}

\begin{abstract}
We study the shape optimization problem introduced by Mazzoleni, Muratov, and Ruffini as a model for a superconducting charge qubit, in which the first Dirichlet eigenvalue of a set of prescribed volume competes with the Coulomb self-energy of the ground state. We prove that, for sufficiently small charge, the ball is the unique minimizer, and that the energy deficit of a volume-preserving and centered perturbation of the ball is comparable to the squared $H^{1/2}$ norm of the boundary graph, with constants independent of the charge. We also obtain the corresponding results for the quadratic Hartree--Ohta--Kawasaki model.
\end{abstract}

\maketitle

%%%%%%%%%%%%%%%%%%%%%%%%%%%%%%%%%%%%%%%%%%%%%%%%%%%%%%%%%%%%%%%%%%%%%%%%%%%%%%
%%%%%%%%%%%%%%%%%%%%%%%%%%%%%%%%%%%%%%%%%%%%%%%%%%%%%%%%%%%%%%%%%%%%%%%%%%%%%%
\section{Introduction}\label{sec:intro}
%%%%%%%%%%%%%%%%%%%%%%%%%%%%%%%%%%%%%%%%%%%%%%%%%%%%%%%%%%%%%%%%%%%%%%%%%%%%%%
%%%%%%%%%%%%%%%%%%%%%%%%%%%%%%%%%%%%%%%%%%%%%%%%%%%%%%%%%%%%%%%%%%%%%%%%%%%%%%

For the functional
	\[
		\E(u,\Om) \defeq \int_\Om |\nabla u|^2 \dd x + \frac{q}{2}\int_\Om \int_\Om \frac{u^2(x) u^2(y)}{|x-y|}\dd x \dd y,
	\]
where $q\geq0$ is a fixed parameter, referred to as the charge, $\Om\subset\R^3$ is a measurable set of finite measure (more precisely, a quasi-open set; see below), and $u\in H_0^1(\Om)$, we define the energy
	\[
		\en(\Om) \defeq \inf \Big\{ \E(u,\Om) \colon u\in H^1_0(\Om),\ \int_\Om u^2 \dd x = 1\Big\}.
	\]
We are interested in the shape optimization problem
	\begin{equation}\label{eq:min-min_prob}
		e_q(m) \defeq \inf \big\{ \en(\Om) \colon |\Om|=m \big\}.
	\end{equation}
By scaling of the energy (see Remark~\ref{rem:scaling}), without loss of generality, we take $m$ to be the volume of the unit ball throughout this note. That is, we consider the problem $e_q\big(|B_1|\big)$, where $B_1\subset\R^3$ denotes the ball of radius one centered at the origin. Our main result is the characterization of the global minimizers as balls up to translations and sets of Sobolev capacity zero.

\begin{theorem}\label{thm:global}
There exists $q_0>0$ such that for every $0\leq q\leq q_0$ we have $e_q(|B_1|)=\en(B_1)$, and the minimizers of \eqref{eq:min-min_prob} with $m=|B_1|$ are exactly the translates of $B_1$, up to sets of Sobolev capacity zero.
\end{theorem}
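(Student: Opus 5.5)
The argument is a two-step reduction: a quantitative stability estimate near the ball, and a compactness argument that brings every minimizer close to the ball as $q\to0$. This is the scheme used for nonlocal isoperimetric problems (Knüpfer--Muratov, Figalli et al.), with the perimeter replaced by the first Dirichlet eigenvalue $\lambda_1$.

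\emph{Step 1: existence and regularity uniform in $q$.} First I will show that for $q\le q_0$ minimizers of \eqref{eq:min-min_prob} exist among quasi-open sets of measure $|B_1|$. A ground state $u$ of $\en(\Om)$ solves
\[
-\Delta u + q\,(|x|^{-1}*u^2)\,u=\mu u \quad\text{in }\Om .
\]
The Coulomb potential $|x|^{-1}*u^2$ is bounded by $C\|u\|_{L^2}^{4/3}\|u\|_{L^\infty}^{2/3}$, or by a similar interpolation bound. Hence the nonlocal term is a bounded, Lipschitz-in-shape perturbation of $\lambda_1$. Existence then follows by concentration-compactness in the spirit of Buttazzo--Dal Maso. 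Dichotomy is excluded for small $q$ because splitting mass costs a definite amount of Dirichlet energy, while the gain in Coulomb energy is $O(q)$.

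Next I will prove that $u$ is a quasi-minimizer of the one-phase Alt--Caffarelli type functional. This gives Lipschitz continuity of $u$ and density bounds with constants independent of $q\le q_0$. From these bounds, minimizers are bounded and have boundedly many connected components; in fact they are connected, since otherwise one piece could be rescaled.

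\emph{Step 2: closeness to the ball.} We have $\en(\Om_q)\le \en(B_1)=\lambda_1(B_1)+O(q)$ and $\en\ge\lambda_1$. Therefore
\[
\lambda_1(\Om_q)-\lambda_1(B_1)\le Cq .
\]
The quantitative Faber--Krahn inequality (Brasco--De Philippis--Velichkov) gives Fraenkel asymmetry $\lesssim q^{1/2}$, and the ground states converge in $L^2$ and $H^1$.

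To upgrade this $L^1$-closeness to $C^{2,\alpha}$-closeness of the boundary, I will use the improvement-of-flatness regularity theory for quasi-minimizers of the Alt--Caffarelli functional. The sets $\Om_q$ solve a one-phase free boundary problem with the smooth, uniformly controlled right-hand side $q\,(|x|^{-1}*u^2)\,u$. The free boundary condition $|\nabla u|=\text{const}$ (from the volume Lagrange multiplier) then yields $\partial\Om_q=\{(1+\varphi_q(x))x:x\in\partial B_1\}$ after translation, with $\|\varphi_q\|_{C^{2,\alpha}}\to0$. I expect this regularity step to be the main obstacle: the uniform-in-$q$ $\varepsilon$-regularity for the nonlocal free boundary problem has to be checked with care.

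\emph{Step 3: local minimality via a Fuglede-type estimate.} After centering, the barycenter condition kills the translation modes. For nearly spherical $\Om_\varphi$ the second-variation analysis of $\lambda_1$ (Brasco--De Philippis--Velichkov / Dambrine) gives
\[
\lambda_1(\Om_\varphi)-\lambda_1(B_1)\ge c\,\|\varphi\|_{H^{1/2}(\partial B_1)}^2 .
\]
For the Coulomb part I will show
\[
\bigl|[\en-\lambda_1](\Om_\varphi)-[\en-\lambda_1](B_1)\bigr|\le Cq\,\|\varphi\|_{H^{1/2}}^2 .
\]
The first variation of the Coulomb term vanishes at the ball under the volume and barycenter constraints, because by radial symmetry the shape derivative is a constant times $\int\varphi$ plus first harmonics. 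The second variation is controlled by $H^{1/2}$-bounds on the shape derivative of $u$.

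Combining these bounds gives $\en(\Om_\varphi)-\en(B_1)\ge (c-Cq)\|\varphi\|_{H^{1/2}}^2>0$ unless $\varphi=0$ and $q<c/C$. Applied to the minimizers $\Om_q$ from Step 2, this forces $\Om_q$ to be a translate of $B_1$ up to capacity-zero sets. It also gives $e_q(|B_1|)=\en(B_1)$ and the uniqueness.
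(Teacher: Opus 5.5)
The gap is in Step~3, the estimate on which the whole theorem rests, which you assert but do not prove. The rest of your architecture matches the paper: near-sphericity of minimizers for small $q$, then a $q$-uniform $H^{1/2}$ deficit bound for centered, volume-preserving radial graphs. Your Steps 1--2 are exactly \cite[Theorem 1.2]{MMR}, which the paper simply quotes, so the regularity step you flag as the main obstacle is already available.

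\textbf{The missing remainder estimate.} Write $F_q\defeq\en-\lambda_1$. Your bound $|F_q(\Om_\vphi)-F_q(B_1)|\leq Cq\norm{\vphi}_{H^{1/2}}^2$ is a statement about the full Taylor remainder, uniform in $q$ and over a $C^{2,\alpha}$-neighborhood. A second variation computed at $B_1$, or $H^{1/2}$ bounds on the shape derivative $u'$ at the ball, control only the quadratic part at the ball. What you need is that the second shape derivative of $F_q$ at every intermediate set $\Om_{t\vphi}$, $t\in[0,1]$, is bounded by $Cq$ in the $H^{1/2}\to H^{-1/2}$ operator norm. That requires a second-order shape calculus for the nonlinear, nonlocal ground state on non-spherical domains: uniqueness and nondegeneracy of the state, adjoint problems, and Schauder bounds uniform in $q$ and $\vphi$. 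This is precisely the continuity step at the core of the second-shape-derivative proof in \cite{MMR26R}, carried out there for the quadratic model.

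Note also that along an exactly volume-preserving graph the linear term does not vanish. The shape gradients of $\en$ and $\lambda_1$ at $B_1$ are the constants $-c_q^2$ and $-\pi/2$, so the linear term equals $-(c_q^2-\pi/2)\int_{\pt B_1}\vphi\dd\Htwo(\theta)$. This is $O(q)\norm{\vphi}_{L^2}^2$ only through the volume identity.

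If you pursue this route, the envelope formula $F_q(\Om)=\int_0^q\nl(u_{s,\Om}^2,u_{s,\Om}^2)\dd s$, with $u_{s,\Om}$ the ground state at charge $s$, makes the factor $q$ structural. It reduces the claim to $s$-uniform second-order $H^{1/2}$ bounds for $\Om\mapsto\nl(u_{s,\Om}^2,u_{s,\Om}^2)$. Do not instead expand $F_q(\Om)=q\nl(\psi_\Om^2,\psi_\Om^2)+O(q^2)$: that error does not vanish at $\Om=B_1$ and cannot yield rigidity (Remark~\ref{rem:dirichlet}).

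\textbf{How the paper avoids this.} The paper uses no shape derivatives at all. It completes the Coulomb square around $u_q$ and continues $u_q$ across $\pt B_1$ as the radial solution $\Psi_q$ of its own linear equation. This gives an exact identity (Lemma~\ref{lem:comparison}) for the deficit of every normalized $v$ on $\Om_\vphi$. Take $r=v-\alpha\Psi_q$ orthogonal to $\Psi_q$ and $z=r\circ T_\vphi$. Then the non-Coulomb part is the Robin form $Q(z)$ up to $O((q+\delta)\norm{z}_{H^1(B_1)}^2)$. The defect terms of Lemma~\ref{lem:robin} are small multiples of $\norm{z}_{H^1(B_1)}$, thanks to the volume and barycenter constraints and that orthogonality. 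This needs only $C^{1,\beta}$-smallness, no information on the optimal state of $\Om_\vphi$, and no remainder estimate.

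\textbf{Capacity-zero identification.} Your last sentence asserts it without argument, and it is not automatic. The regularity of Step~2 describes the positivity set $\{u>0\}$, not the quasi-open minimizer $\Om$. First you need the dilation argument to get $|\Om\setminus\{u>0\}|=0$. Then you need to upgrade a.e.\ equality with $B_1$ to q.e.\ equality. The paper shows $u_q\in H^1_0(\Om)$, since a minimizing sequence in the closed subspace $H^1_0(\Om)\subset H^1_0(B_1)$ converges to the unique nonnegative ball state; hence $\capa(B_1\setminus\Om)=0$. It then writes $\Om$ as the positivity set of some $w\in H^1$ vanishing a.e.\ outside $B_1$, which gives $\capa(\Om\setminus B_1)=0$.
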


For $q=0$ the energy $\en(\Om)$ is the first Dirichlet eigenvalue of $\Om$, and its minimality on balls among sets of prescribed volume is the classical Faber--Krahn inequality. For $q>0$ the Dirichlet term, which favors round sets, competes with the Coulomb self-interaction of the ground state, which favors spreading. Hence, the problem \eqref{eq:min-min_prob} can be considered as another example of geometric variational problems with competing short-range attractive and long-range repulsive interactions, which have received significant attention in the last two decades. 

The problem \eqref{eq:min-min_prob} was introduced by Mazzoleni, Muratov and Ruffini in \cite{MMR} as an optimal design problem for a superconducting charge qubit. Such a device, also called a Cooper-pair box, consists of a nanoscale superconducting island $\Omega$ embedded in an insulator, and quantum information is stored in the number of Cooper pairs on the island \cite{Bouchiat98,Kjaergaard20}. In the standard description the island is a capacitor and its shape enters only through the capacitance. When the island is small, the kinetic energy of the pairs competes with their mutual Coulomb repulsion. In \cite{MMR} the authors model the ground state of the pairs by the Hartree functional on $\Omega$ with Dirichlet boundary conditions, where the dimensionless parameter $q$ absorbs the number of pairs and all physical constants. The optimal design problem then asks which island of a given volume has the lowest ground-state energy.
%In \cite{MMR} this is interpreted as the shape that supports the largest number of stable charge states at a fixed gate voltage.

The main result of \cite{MMR} is an existence and regularity theorem in the small-charge regime. Indeed, the authors prove that for every $\e>0$ there exist $q^*=q^*(\e)>0$ and $\alpha=\alpha(\e)\in(0,1)$ satisfying the following: If $0<q\leq q^*$, then there exists an optimal set for the problem \eqref{eq:min-min_prob} with $m=|B_1|$, and every optimal set $\Om$ is $C^{2,\alpha}$-nearly spherical, namely, up to a translation,
	\begin{equation}\label{eq:MMR}
		\pt \Om = \big\{(1+\vphi(x))x \colon x\in\pt B_1\big\}, \qquad \vphi\in C^{2,\alpha}(\pt B_1), \qquad \norm{\vphi}_{C^{2,\alpha}(\pt B_1)} \leq \e,
	\end{equation}
see \cite[Theorem 1.2]{MMR}. In \cite[Remark 1.3]{MMR} the authors conjecture that for small $q$ the ball is the unique minimizer. Furthermore they show that in the large-charge regime balls are not minimizers \cite[Theorem 1.5]{MMR}. We also refer to \cite{MMR26N} for nonexistence results for screened Hartree energies.

Theorem~\ref{thm:global} settles the conjecture stated in \cite[Remark 1.3]{MMR}. The same conclusion is obtained in the anonymous preprint \cite{Anon} through the boundary flux of the ground state and the implicit function theorem. An argument of that type classifies all nearby volume-constrained critical domains and not only the minimizers. On the other hand, we prove Theorem~\ref{thm:global} by a direct comparison argument which requires no shape derivative of the optimized state, and which yields at the same time, without any second-order expansion of the minimized energy, a two-sided quantitative estimate for the energy deficit of nearly spherical sets (see Theorem~\ref{thm:local} below).

Throughout we adopt the quasi-open formulation of \cite[Section 2.3]{MMR}: we define $H^1_0(\Om)$ for quasi-open sets $\Om$ and identify admissible sets up to Sobolev capacity zero. When the boundary of a minimizer is discussed, its regular representative is understood (cf. Section~\ref{sec:global}). For functions extended by zero outside their domain we set
	\[
		\nl(f,g) \defeq \frac12 \int_{\R^3} \int_{\R^3} \frac{f(x)g(y)}{|x-y|}\dd x \dd y,
	\]
so that
	\begin{equation}\label{eq:energy}
		\E(u,\Om) = \int_\Om |\nabla u|^2 \dd x + q\nl(u^2,u^2).
	\end{equation}

Our main global minimality result relies on a stability result, and to state this stability result we need the following notation. Let $u_q$ be the positive normalized minimizer in the definition of $\en(B_1)$ (see Lemma~\ref{lem:ball} for its existence, uniqueness, and radial symmetry). Fix $\beta\in(0,1)$ and let 
	\[
		X\defeq C^{1,\beta}(\pt B_1).
	\]
We parametrize nearly spherical sets as radial graphs over the unit sphere,
	\[
		\Om_\vphi \defeq \big\{r\theta \colon 0\leq r<1+\vphi(\theta),\ \theta\in\pt B_1\big\}, \qquad \vphi\in X,
	\]
and by $T_\vphi\colon\R^3\to\R^3$ we denote the bi-Lipschitz diffeomorphism with $T_\vphi(B_1)=\Om_\vphi$ constructed in Lemma~\ref{lem:extension}. This map extends the boundary map $\theta\mapsto(1+\vphi(\theta))\theta$ linearly in $\vphi$, and its distance from the identity is controlled in $C^1$ by $\norm{\vphi}_X$ and in $H^1$ by $\norm{\vphi}_{H^{1/2}}$.

\begin{theorem}\label{thm:local}
There exist $q_0,\delta,\kappa,C>0$ with the following property. If
	\begin{equation}\label{eq:constraints}
		0\leq q\leq q_0, \qquad \norm{\vphi}_X\leq\delta, \qquad |\Om_\vphi|=|B_1|, \qquad \int_{\Om_\vphi}x\dd x=0,
	\end{equation}
then every $v\in H^1_0(\Om_\vphi)$ with $\norm{v}_2=1$ satisfies
	\begin{multline}\label{eq:joint}
		\E(v,\Om_\vphi)-\en(B_1) \\ \geq \kappa\Big(\norm{\vphi}_{H^{1/2}}^2 + \min_{\sigma\in\{-1,1\}}\norm{v\circ T_\vphi-\sigma u_q}_{H^1(B_1)}^2\Big) + q\nl(v^2-u_q^2,v^2-u_q^2).
	\end{multline}
Moreover,
	\begin{equation}\label{eq:two-sided}
		\kappa\norm{\vphi}_{H^{1/2}}^2 \leq \en(\Om_\vphi)-\en(B_1) \leq C\norm{\vphi}_{H^{1/2}}^2.
	\end{equation}
\end{theorem}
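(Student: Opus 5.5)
The plan is a direct comparison argument built on the quantitative Faber--Krahn inequality together with the exact quadratic structure of the Coulomb term. Write $\lambda_1(\Om)$ for the first Dirichlet eigenvalue and $\lambda_1 = \lambda_1(B_1)$. For $v\in H^1_0(\Om_\vphi)$ with $\norm{v}_2=1$, expanding $\nl(v^2,v^2)$ around $u_q^2$ gives the identity
\[
	\E(v,\Om_\vphi)-\en(B_1) = \Big(\int|\nabla v|^2 - \int|\nabla u_q|^2 + 2q\nl(u_q^2, v^2-u_q^2)\Big) + q\nl(v^2-u_q^2,v^2-u_q^2),
\]
where all functions are extended by zero. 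The last term is exactly the remainder in \eqref{eq:joint}, so it suffices to bound the bracket from below by $\kappa(\norm{\vphi}_{H^{1/2}}^2+\min_\sigma\norm{v\circ T_\vphi-\sigma u_q}^2_{H^1(B_1)})$. Set $W_q \defeq 2q\,|x|^{-1}*u_q^2$, a radial, smooth potential of size $O(q)$ in $C^1$ near $\overline{B_1}$. The bracket then equals $\int|\nabla v|^2+\int W_q v^2 - \big(\int|\nabla u_q|^2+\int W_q u_q^2\big)$. The Euler--Lagrange equation of Lemma~\ref{lem:ball} shows that $u_q$ is the ground state of $-\Delta+W_q$ on $B_1$ with eigenvalue $\mu_q$, and the subtracted quantity is exactly $\mu_q$. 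Thus we must show the linearized Schrödinger deficit $\int|\nabla v|^2+\int W_qv^2-\mu_q$ controls both $\norm{\vphi}_{H^{1/2}}^2$ and the distance to $\pm u_q$.

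Next I would pull back to $B_1$ via $w=v\circ T_\vphi$. Using the $C^1$ and $H^1$ control of $T_\vphi-\id$ from Lemma~\ref{lem:extension}, the deficit splits into two parts. The first is a pure shape part, namely the deficit for the Schrödinger operator with potential $W_q$ evaluated at its ground state on $\Om_\vphi$. The second is a spectral-gap part: the operator $-\Delta+W_q$ on $B_1$ has a gap $\lambda_2-\lambda_1-O(q)$, which gives $\int|\nabla w|^2+\int W_q w^2-\mu_q\|w\|^2 \geq c\min_\sigma\norm{w-\sigma u_q}_{H^1}^2$. The cross terms created by the change of variables are $O(\norm{\vphi}_X)$ times these squares, so they are absorbed once $\delta$ is small.

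For the shape part at $q=0$, I would use the quantitative Faber--Krahn inequality for nearly spherical sets: $\lambda_1(\Om_\vphi)-\lambda_1\geq c\norm{\vphi}_{H^{1/2}}^2$ under the volume and barycenter constraints, which is the Fuglede-type estimate of Brasco--De Philippis--Velichkov. For $q>0$, the potential $W_q$ perturbs the eigenvalue deficit by at most $Cq\norm{\vphi}_{H^{1/2}}^2$. The justification is that $W_q$ is radial, so its first variation pairs only with $\int\vphi$, and the volume constraint makes that term quadratic. Because $\nabla W_q=O(q)$, the second variation is $O(q)\norm{\vphi}^2_{H^{1/2}}$; I would obtain it by comparing ground states through $T_\vphi$ rather than through shape derivatives. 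Choosing $q_0$ small then closes \eqref{eq:joint}. I expect this step to be the main obstacle: the first-order term in $\vphi$ must cancel exactly, using the radiality of $W_q$ and $u_q$ and the fact that $|\nabla u_q|$ is constant on $\pt B_1$, and every error has to be quadratic in $H^{1/2}$ with constants uniform in $q$.

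Finally, the lower bound in \eqref{eq:two-sided} follows by taking the infimum over $v$ in \eqref{eq:joint}, since the nonlocal remainder is nonnegative. For the upper bound, I would test with $v=u_q\circ T_\vphi^{-1}/\norm{u_q\circ T_\vphi^{-1}}_2$. The first-order term vanishes because $u_q$ is critical and $\int_{\pt B_1}\vphi=O(\norm{\vphi}^2)$ by the volume constraint. The second-order term is bounded by $C\norm{T_\vphi-\id}_{H^1}^2\leq C\norm{\vphi}_{H^{1/2}}^2$, using Lemma~\ref{lem:extension} and the smoothness of $u_q$.
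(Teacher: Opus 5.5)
\textbf{There is a genuine gap: the uniform-in-$q$ shape estimate, which carries essentially all the difficulty of the theorem, is asserted but not proved.}

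Your reduction is sound in outline once a factor is fixed. Since $2\nl(f,g)=\int g\,Kf\dd x$, the frozen potential is $W_q=qV_q=q\,|x|^{-1}*u_q^2$, not $2q\,|x|^{-1}*u_q^2$. Only with this normalization is $u_q$ the ground state of $-\Delta+W_q$ with eigenvalue $\mu_q$. Write $\Lambda_q(\Om)$ for the first Dirichlet eigenvalue of $-\Delta+qV_q$ on $\Om$, and $\phi_{t,\Om}$ for the ground state of $-\Delta+tV_q$ on $\Om$. Then \eqref{eq:joint} would follow from two facts:
(a) $\Lambda_q(\Om_\vphi)-\Lambda_q(B_1)\geq c\norm{\vphi}_{H^{1/2}}^2$, uniformly for $q\leq q_0$ and $\norm{\vphi}_X\leq\delta$;
(b) a uniform gap on $\Om_\vphi$ together with $\norm{\phi_{q,\Om_\vphi}\circ T_\vphi-u_q}_{H^1(B_1)}\leq C\norm{\vphi}_{H^{1/2}}$.

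Fact (a) is the gap. BDV gives it at $q=0$. Your argument for $q>0$ (rotation invariance makes the first variation a multiple of $\int\vphi$, and the second variation is $O(q)$) only concerns derivatives at $\vphi=0$. To reach every $\vphi$ in the $C^{1,\beta}$-ball, you need the remainder of $\vphi\mapsto\Lambda_q(\Om_\vphi)-\lambda_1(\Om_\vphi)$ to be at most $Cq\norm{\vphi}_{H^{1/2}}^2$. That is, it must be quadratic in the weak norm, while smallness holds only in the strong norm.
\begin{itemize}
\item Cross-testing the $q$- and $0$-ground states of $\Om_\vphi$ leaves an additive $O(q^2)$ error that does not vanish at $\vphi=0$, exactly as in Remark~\ref{rem:dirichlet}.
\item Comparing at equal coupling via $\frac{d}{dt}\lambda_1(-\Delta+tV_q;\Om)=\int_\Om V_q\phi_{t,\Om}^2\dd x$ reduces the task to showing $\int_{\Om_\vphi}V_q\phi_{t,\Om_\vphi}^2\dd x-\int_{B_1}V_q\phi_{t,B_1}^2\dd x=O(\norm{\vphi}_{H^{1/2}}^2)$.
\item Straightforward transport through $T_\vphi$, even after extracting the linear term, only gives remainders like $C\norm{\vphi}_X\norm{\vphi}_{H^{1/2}}$. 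These are first order in the $H^{1/2}$ norm and cannot be absorbed.
\end{itemize}
What is needed is a second-order domain-perturbation analysis with $H^{1/2}$ remainder control, uniform in $t\in[0,q]$. This is the step you flag as the main obstacle and leave open. It is the uniform Taylor-remainder issue that \cite{MMR26R} resolve through operator-norm continuity of the second variation.

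Your spectral-gap step has a smaller defect of the same kind. Applying the gap of $-\Delta+W_q$ on $B_1$ to the non-normalized $w=v\circ T_\vphi$ produces change-of-variables errors of size $C\delta\norm{w}_{H^1(B_1)}^2$, not $\delta$ times the squares. You must instead use the gap on $\Om_\vphi$ and prove (b).

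The paper avoids both (a) and (b) by never perturbing a ground state of $\Om_\vphi$.
\begin{itemize}
\item It continues $u_q$ radially across $\pt B_1$ to $\Psi_q$, which solves $-\Delta\Psi_q+(qV_q-\mu_q)\Psi_q=0$ in $B_2$.
\item It splits $v=\alpha\Psi_q+r$ with $r\perp\Psi_q$ in $L^2(\Om_\vphi)$, and integrates by parts to get the exact identity of Lemma~\ref{lem:comparison}. The deficit equals $\int_{\Om_\vphi}(|\nabla r|^2+(qV_q-\mu_q)r^2)\dd x-\alpha^2\int_{\pt\Om_\vphi}\Psi_q\pt_\nu\Psi_q\dd\Htwo+q\nl(v^2-u_q^2,v^2-u_q^2)$.
\item Since $\Psi_q$ solves the frozen equation exactly on every nearby set, only $r$ is transported. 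The change of variables therefore costs $O(q+\delta)\norm{z}_{H^1(B_1)}^2$, with $z=r\circ T_\vphi$.
\item All shape dependence sits in the boundary integral. Using $\Psi_q(1+s)=-c_qs+c_qs^2+O(|s|^3)$, the volume identity and $z|_{\pt B_1}=-\alpha\Psi_q(1+\vphi)$, it equals $2\int_{\pt B_1}z^2\dd\Htwo+O(\delta\norm{z}_{H^1(B_1)}^2)$.
\item What remains is coercivity of the Robin form $Q$ on the fixed ball (Lemma~\ref{lem:robin}). Its degenerate directions are bounded by small multiples of $\norm{z}_{H^1(B_1)}$, using the orthogonality and the volume and barycenter constraints. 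Finally $\norm{\vphi}_{H^{1/2}}$ is recovered from the trace of $z$.
\end{itemize}
This gives the uniform-in-$q$ quadratic bound, including $q=0$, with no shape derivatives and without BDV. Your upper bound with the transported trial state matches Step~4 and is fine. There too, the paper reads the quadratic bound off the identity with $\alpha=1$ rather than from a first- and second-order expansion.
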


\bigskip

\begin{remark}\label{rem:L2} In \eqref{eq:joint} the transported $H^1$ distance may be replaced by $\norm{v-\sigma u_q}_2$. That is,
	\[
		\E(v,\Omega_\varphi)-\E(B_1) \geq \kappa\Big(\norm{\varphi}_{H^{1/2}}^2+\min_{\sigma\in\{-1,1\}}\norm{v-\sigma u_q}_2^2\Big)+q\,\nl(v^2-u_q^2,\,v^2-u_q^2),
	\]
where $v$ and $u_q$ are extended by zero. If $v\geq0$, the choice $\sigma=1$ is admissible in both versions of \eqref{eq:joint}.
\end{remark}

We note that Theorem~\ref{thm:local} contains no minimality or criticality assumption: it concerns every normalized state on every set in the neighborhood \eqref{eq:constraints}, and it uses neither \eqref{eq:MMR} nor any regularity of an optimal state on $\Om_\vphi$. The last term in \eqref{eq:joint} is nonnegative, the Coulomb form being positive definite, and we retain it because the identity from which \eqref{eq:joint} is derived is exact. As in every local argument of this type, the quantitative estimates are restricted to the neighborhood \eqref{eq:constraints}. The near-sphericity of actual minimizers in \eqref{eq:MMR} does not reduce arbitrary competitors to this neighborhood, and we do not claim a deficit inequality without a proximity assumption. The constants in \eqref{eq:joint} and \eqref{eq:two-sided} are independent of the charge, and at $q=0$ the lower bound in \eqref{eq:two-sided} reduces to the quantitative Faber--Krahn inequality for nearly spherical sets, which is the local step of the selection principle in \cite{BDV} (see also \cite{AKN23}).

The proof of Theorem~\ref{thm:local} follows by the combination of three main ingredients: (a) an exact comparison identity, obtained by completing the Coulomb square around $u_q$ and by continuing $u_q$ radially across the sphere as a solution of its own linear equation, so that the comparison function is defined on every nearby set (Lemma~\ref{lem:comparison}); (b) a coercivity estimate on the fixed ball for the Robin form which arises from the boundary term of this identity, in which the degenerate directions, namely the radial one and the three translations, are not projected away but retained with explicit defects (Lemma~\ref{lem:robin}); (c) the observation that, once the state is decomposed orthogonally along the continued ground state, the volume and barycenter constraints make all these defects small multiples of the remainder, so that no additive error depending only on $q$ appears (Section~\ref{sec:proof}). The third point is where the argument departs from the decomposition that one would first try, which we now describe.

\medskip

\begin{remark}[The decomposition about the Dirichlet ground state]\label{rem:dirichlet}
The strategy suggested by the quantitative Faber--Krahn inequality is the following. Let $\Om$ be a minimizer, let $u_{q,\Om}$ be a nonnegative optimal state on $\Om$, and let $\lambda_1(\Om)$ and $\psi_\Om$ denote the first Dirichlet eigenvalue and the positive normalized first eigenfunction of $\Om$. Decompose $u_{q,\Om}=a\psi_\Om+r$ with $r\perp\psi_\Om$ in $L^2(\Om)$. Comparing $u_{q,\Om}$ with $\psi_\Om$ in the minimization defining $\en(\Om)$, and using that the spectral gap of $\Om$ is bounded below uniformly by \eqref{eq:MMR}, gives $\norm{r}_{H^1(\Om)}\leq Cq$, hence
	\[
		\en(\Om) = \lambda_1(\Om) + q\nl(\psi_\Om^2,\psi_\Om^2) + O(q^2).
	\]
Since $\en(\Om)\leq\en(B_1)\leq\pi^2+q\nl(\psi^2,\psi^2)$, the sharp quantitative Faber--Krahn inequality of \cite{BDV,AKN23} applied to $\lambda_1(\Om)-\pi^2$, together with the Lipschitz dependence of the Coulomb energy on the state and of $\psi_\Om$ on the graph of $\Om$, yields
	\[
		|\Om\triangle B_1| + \norm{u_{q,\Om}-\psi}_2 \leq Cq
	\]
after a translation. As stated, this does not imply rigidity. Moreover, no refinement of the estimates can turn it into a rigidity estimate, because the error $O(q^2)$ in the expansion of $\en(\Om)$ does not vanish when $\Om=B_1$, and is therefore unrelated to any defect of $\Om$. Indeed, let $w\in H^1_0(B_1)$ be the solution of
	\[
		(-\Delta-\pi^2)w = -\Big(K(\psi^2)-\int_{B_1}K(\psi^2)\,\psi^2\dd x\Big)\psi \quad\text{in } B_1,
	\]
which is orthogonal to $\psi$, and set
	\[
		N \defeq \int_{B_1}|\nabla w|^2\dd x - \pi^2\int_{B_1}w^2\dd x.
	\]
Then $N>0$, since $w\perp\psi$ and $w\neq0$, the potential $K(\psi^2)$ being nonconstant in $B_1$. Second-order perturbation theory gives $u_q=\psi+qw+O(q^2)$ in $H^1(B_1)$,
	\[
		\int_{B_1}|\nabla u_q|^2\dd x = \pi^2 + q^2N + O(q^3), \qquad
		\nl(u_q^2,u_q^2) = \nl(\psi^2,\psi^2) - 2qN + O(q^2),
	\]
and consequently
	\[
		\en(B_1) = \pi^2 + q\nl(\psi^2,\psi^2) - q^2N + O(q^3).
	\]
That is, the optimized state lowers the Coulomb energy at first order in $q$ at the cost of raising the kinetic energy at second order only, and the decomposition about $\psi_\Om$ records this gain of order $q^2$ as an error, for every $\Om$ and in particular for the ball. The comparison identity of Lemma~\ref{lem:comparison} expands about $u_q$ itself, and this adjustment is then accounted for exactly.
\end{remark}

The paper is organized as follows. Section~\ref{sec:ball} contains the properties of the ball state, its radial continuation, and the exact comparison identity. Section~\ref{sec:fixed} contains the two fixed-domain ingredients, the boundary extension and the Robin coercivity estimate. Theorem~\ref{thm:local} is proved in Section~\ref{sec:proof} and Theorem~\ref{thm:global} in Section~\ref{sec:global}. The quadratic model of \cite{MMR26H} is treated in Section~\ref{sec:quadratic}.

\subsection*{Notation}
We denote by
	\begin{equation}\label{eq:first-eigenfunction}
			\psi(x) \defeq \frac{\sin(\pi|x|)}{\sqrt{2\pi}\,|x|}
	\end{equation}
the positive normalized first Dirichlet eigenfunction of $B_1$, with eigenvalue $\pi^2$. The value at the origin is defined by continuity. Points of $\pt B_1$ are denoted by $\theta$, and integrals over $\pt B_1$ are written with respect to $\Htwo(\theta)$. For a function extended by zero outside its domain, $\norm{\cdot}_p$ denotes the $L^p(\R^3)$ norm. For a function on $\pt B_1$ all norms, including $\norm{\cdot}_{L^2}$ and $\norm{\cdot}_{H^{1/2}}$, are taken on $\pt B_1$. For a function $f$ extended by zero outside its domain we denote its Coulomb potential by
	\[
		Kf(x) \defeq \int_{\R^3}\frac{f(y)}{|x-y|}\dd y,
	\]
so that
	\begin{equation}\label{eq:coulomb-identities}
		2\nl(f,g) = \int_{\R^3} g\,Kf\dd x, \qquad \nl(f,f) = \frac{1}{8\pi}\int_{\R^3}\abs{\nabla Kf}^2\dd x \geq 0
	\end{equation}
for all real $f$, $g$ of finite Coulomb energy (see \cite[Theorem 9.8]{LiebLoss}). Constants denoted by $C$, $c$, $\kappa$ are positive and independent of $q$, $\vphi$, and the states involved. Their value may change from line to line.

\medskip

\subsection*{Note on the LLM usage} AI tools were used in this project. In particular, the boundary extension lemma (Lemma~\ref{lem:extension}) and the Robin coercivity (Lemma~\ref{lem:robin}) were obtained through various interactions with LLMs with rather minimal input of ideas. These are the two technical ingredients needed to turn the identity of Lemma~\ref{lem:comparison} into a lower bound. Discussions with LLMs were also essential in the formulation of Section~\ref{sec:quadratic}. All mathematical arguments and proofs in the final manuscript were checked and written by the author.

\medskip

\subsection*{Note on a related paper} The preprint \cite{MMR26R} by Mazzoleni, Moraschi, and Ruffini was posted on arXiv two days after the first version of this paper. In their paper the authors prove that for sufficiently small $q$ the ball is the unique minimizer of the quadratic Hartree--Ohta--Kawasaki energy introduced in \cite{MMR26H}, settling their earlier conjecture. The two works are
independent, and we thank the authors for bringing theirs to our
attention. Their proof is a second shape derivative argument, with an
elegant continuity step at its core. The second variation of the energy
along normal graphs of the ball is shown to converge, in operator norm
on $H^{1/2}(\partial B_1)$, to that of the first Dirichlet eigenvalue as
$q\to0$, which gives precisely the uniform control of the Taylor
remainder that pointwise convergence cannot. This leads to a clean
$H^{1/2}$ lower bound for a suitably penalized energy, and, as the
authors point out, the strategy is adaptable to a broad class of
nonlocal shape functionals. The same rigidity statement for the
quadratic model is obtained in Section~\ref{sec:quadratic} below by the
comparison argument of this paper, which does not use shape derivatives.

\bigskip
%%%%%%%%%%%%%%%%%%%%%%%%%%%%%%%%%%%%%%%%%%%%%%%%%%%%%%%%%%%%%%%%%%%%%%%%%%%%%%
%%%%%%%%%%%%%%%%%%%%%%%%%%%%%%%%%%%%%%%%%%%%%%%%%%%%%%%%%%%%%%%%%%%%%%%%%%%%%%
\section{The ball state and an exact comparison}\label{sec:ball}
%%%%%%%%%%%%%%%%%%%%%%%%%%%%%%%%%%%%%%%%%%%%%%%%%%%%%%%%%%%%%%%%%%%%%%%%%%%%%%
%%%%%%%%%%%%%%%%%%%%%%%%%%%%%%%%%%%%%%%%%%%%%%%%%%%%%%%%%%%%%%%%%%%%%%%%%%%%%%

We define the potential
	\[
		V_q \defeq K(u_q^2), \qquad\text{that is,}\qquad V_q(x) = \int_{B_1}\frac{u_q^2(y)}{|x-y|}\dd y,
	\]
and the Lagrange multiplier
	\[
		\mu_q \defeq \int_{B_1}|\nabla u_q|^2\dd x + q\int_{B_1}V_qu_q^2\dd x = \en(B_1) + q\nl(u_q^2,u_q^2)>\en(B_1)
	\]
for $q>0$ so that
	\begin{equation}\label{eq:ball-state}
		-\Delta u_q + qV_qu_q = \mu_qu_q \quad\text{in } B_1, \qquad u_q=0 \quad\text{on } \pt B_1.
	\end{equation}
The lemma below shows that the ball state can be continued across the sphere as a solution of the frozen equation, with bounds and a boundary expansion that are uniform in the charge.

\begin{lemma}\label{lem:ball}
For $q\geq0$ sufficiently small the following hold.
\begin{enumerate}
\item The ball has a unique nonnegative normalized minimizing state $u_q$. It is positive in $B_1$, radial, and a first eigenfunction of the Dirichlet operator $-\Delta+qV_q$, with eigenvalue $\mu_q$.
\item The estimates
	\begin{equation}\label{eq:ball-estimates}
		\norm{u_q-\psi}_{C^2(\ol{B}_1)} + |\mu_q-\pi^2| \leq Cq, \qquad \norm{V_q}_{C^1(\R^3)}\leq C
	\end{equation}
	hold uniformly in $q$.
\item There is a radial function $\Psi_q\in C^2(\ol{B}_2)$, with $\Psi_q=u_q$ on $B_1$, which solves
	\begin{equation}\label{eq:continuation}
		-\Delta\Psi_q + (qV_q-\mu_q)\Psi_q = 0 \quad\text{in } B_2.
	\end{equation}
It is bounded in $C^2(\ol{B}_2)$ and in $C^3(\ol{B}_2\setminus B_{1/2})$ uniformly in $q$. Moreover, for $c_q\defeq-\Psi_q'(1)$, we have
	\begin{equation}\label{eq:boundary-taylor}
		c_q = \sqrt{\pi/2}+O(q), \qquad \Psi_q(1+s) = -c_qs + c_qs^2 + O(|s|^3) \quad\text{uniformly for } |s|\leq1.
	\end{equation}
\end{enumerate}
\end{lemma}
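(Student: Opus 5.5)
For (1), I will first obtain existence of a minimizer on $B_1$ by the direct method: $H^1_0(B_1)$ embeds compactly into $L^2$ and $L^{12/5}$. Moreover, $\nl(u^2,u^2)\leq C\norm{u}_{12/5}^4$ by Hardy--Littlewood--Sobolev, so the Coulomb term is continuous along minimizing sequences. Since $\E(|u|,B_1)=\E(u,B_1)$, there is a nonnegative minimizer.

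Symmetric decreasing rearrangement does not increase the Dirichlet term (Pólya--Szegő) and does not decrease $\nl$ (Riesz), so it cannot be used directly to obtain radiality. Instead I will use convexity in the density. Set $\rho=u^2$. The map $\rho\mapsto\int|\nabla\sqrt\rho|^2$ is convex (Lieb--Loss), and $\rho\mapsto\nl(\rho,\rho)$ is strictly convex because $\nl$ is positive definite by \eqref{eq:coulomb-identities}. Hence $\rho\mapsto\E(\sqrt\rho,B_1)$ is strictly convex on the convex set $\{\rho\geq0,\int\rho=1\}$ whenever $q>0$, and the nonnegative minimizer is unique. At $q=0$ uniqueness is simplicity of $\lambda_1(B_1)$. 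Rotation invariance of the energy together with uniqueness then gives radial symmetry.

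The Euler--Lagrange equation is \eqref{eq:ball-state}. Since $V_q$ is bounded, the strong maximum principle (Harnack) gives positivity of $u_q$. A positive eigenfunction of the Schrödinger operator $-\Delta+qV_q$ must be its first eigenfunction, and $\mu_q$ is therefore its first eigenvalue.

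For (2), $\norm{V_q}_{C^1}\leq C$ holds because $u_q^2$ has unit mass, is supported in $B_1$, and is bounded in $L^\infty$ uniformly in $q$. The $L^\infty$ bound comes from elliptic bootstrap using $\en(B_1)\leq\pi^2+Cq$. Then $K$ maps $L^1\cap L^\infty$ with compact support into $C^{1}$, with $|V_q|\leq C$ and $|\nabla V_q|\leq C$ everywhere, and the bound at infinity is obtained from decay.

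Next I will compare with $\psi$. Testing with $\psi$ gives $\mu_q-\pi^2\leq Cq$. The min–max characterization of the first eigenvalue of $-\Delta+qV_q$ gives $|\mu_q-\pi^2|\leq q\norm{V_q}_\infty\leq Cq$. The spectral gap $\lambda_2-\lambda_1$ of $B_1$ is positive, so the first eigenfunction of the perturbed operator satisfies $\norm{u_q-\psi}_{H^1}\leq Cq$ by standard perturbation theory for the projection onto the first eigenspace, with the sign fixed by positivity.

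To upgrade this to $C^2(\ol B_1)$, I write $w=u_q-\psi$, which satisfies
\[
-\Delta w-\pi^2 w=(\mu_q-\pi^2)u_q-qV_qu_q .
\]
The right-hand side has $C^{0,\alpha}$ norm $O(q)$, using the $C^1$ bound on $V_q$. Schauder estimates plus the $L^2$ bound then give $\norm{w}_{C^{2,\alpha}}\leq Cq$.

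For (3), I will work with the radial ODE for $f(r)=\Psi_q(r)$:
\[
f''+\tfrac2r f'=(qV_q(r)-\mu_q)f .
\]
Here $V_q$ is radial and smooth off $\pt B_1$; it is $C^1$ and even $C^{1,1}$ on $[1/2,2]$, since $\Delta V_q=-4\pi u_q^2$ is bounded. The linear ODE has continuous coefficients on $[1/2,2]$, so its solution exists and extends $u_q$ from $r\leq1$. Because the ODE holds for $f$ at $r=1$ from inside, matching $f(1)$ and $f'(1)$ makes the continuation $C^2$. The $C^3$ bound on $\ol B_2\setminus B_{1/2}$ follows by differentiating the equation once, using $V_q\in C^1$. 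Grönwall comparison with the $q=0$ solution $\sin(\pi r)/(\sqrt{2\pi}r)$, which is itself the continuation of $\psi$, gives uniform $C^2$ closeness, $O(q)$, on $\ol B_2$.

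For \eqref{eq:boundary-taylor}, first $c_q=-f'(1)$, and $-\psi'(1)=\pi/(\sqrt{2\pi})=\sqrt{\pi/2}$. Then $f(1)=0$ and the ODE at $r=1$ give
\[
f''(1)=-2f'(1)+(qV_q(1)-\mu_q)\cdot0=2c_q ,
\]
so the quadratic coefficient is $c_q$. The uniform $C^3$ bound controls the cubic remainder for $|s|\leq1$.

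The main obstacle I expect is regularity of $V_q$ across $r=1$, where $u_q^2$ has a kink and $\Delta V_q$ jumps. This limits $\Psi_q$ to $C^3$ rather than smoother, and requires checking that $V_q\in C^1$ suffices for the $C^3$ bound. It does, since $f'''$ involves only $V_q'$.
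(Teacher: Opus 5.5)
Your proposal is correct. Part (iii) follows the paper essentially verbatim: the radial ODE continuation with matched Cauchy data at $\rho=1$, the $C^3$ bound from the $C^1$ bound on $V_q$, and $\Psi_q''(1)=-2\Psi_q'(1)$. You depart from the paper in (i) and in the $H^1$ step of (ii).

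For uniqueness you use the convexity-in-the-density argument. Convexity of $\rho\mapsto\int|\nabla\sqrt\rho|^2$, together with strict positivity of $\nl$, makes the energy strictly convex in $\rho=u^2$. So the nonnegative minimizer is unique before any Euler--Lagrange equation is written. Strict positivity needs one line beyond \eqref{eq:coulomb-identities}: $\nabla Kf=0$ forces $f=0$.

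The paper proceeds differently. It first derives the equation, then obtains positivity and simplicity of the first eigenvalue of the frozen operator $-\Delta+qV_u$. It then completes the Coulomb square about $u^2$ to get the exact identity \eqref{eq:square}. This avoids the convexity theorem for $\int|\nabla\sqrt\rho|^2$. It also shows directly that every normalized minimizer, of either sign, equals $\pm u_q$. And it is the prototype of the comparison identity of Lemma~\ref{lem:comparison} on which the whole paper rests. Your route recovers the sign statement with one extra line: $|u|$ is then a minimizer, so $|u|=u_q>0$ and the continuous solution $u$ has constant sign.

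In (ii) you treat $qV_q$ as a bounded perturbation of size $Cq$ of the Dirichlet Laplacian. Min--max gives $\pi^2\leq\mu_q\leq\pi^2+q\norm{V_q}_\infty$, and the spectral gap controls the rotation of the first eigenprojection. The paper stays variational: it tests with $\psi$, writes $u_q=a\psi+\eta$, and combines the kinetic excess \eqref{eq:kinetic-excess} with the Lipschitz bound \eqref{eq:D-lip} to get $c\norm{\eta}_{H^1(B_1)}^2\leq Cq\norm{\eta}_2$. Yours relies on $\mu_q$ being the first eigenvalue of the self-consistent operator, which you have from (i). The paper's uses only energy comparisons. Both work.

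Two small points. First, the global Schauder estimate has $\norm{w}_{C^0}$, not $\norm{w}_{L^2}$, on its right-hand side, since $\Delta+\pi^2$ has a kernel on $H^1_0(B_1)$. The $L^2\to H^2\to W^{2,p}$ bootstrap, which the paper writes out, is what actually closes your upgrade to $C^2$. Second, the $C^3$ bound only covers $[1/2,2]$. For $s\in[-1,-1/2]$ the expansion \eqref{eq:boundary-taylor} holds simply because both sides are $O(1)$.
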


\begin{proof}
(i) The direct method gives a nonnegative normalized minimizer $u$ of $\E(\cdot,B_1)$, as in \cite[Lemma 2.2]{MMR}. Fix such a $u$ and set
	\[
		V_u \defeq K(u^2), \qquad
		\mu \defeq \int_{B_1}|\nabla u|^2\dd x + q\int_{B_1}V_u u^2\dd x.
	\]
Recall that $\norm{K(f^2)}_\infty\leq2\norm{f}_2\norm{\nabla f}_2$ for $f\in H^1(\R^3)$, by Hardy's inequality (cf. \cite[Lemma 2.1]{MMR}). Hence, $V_u$ is bounded, and the first variation of $\E(\cdot,B_1)$ under the $L^2$-constraint gives
	\[
		-\Delta u + qV_uu = \mu u \quad\text{in } B_1, \qquad u=0 \quad\text{on } \pt B_1.
	\]
Since $u$ is smooth in $B_1$ (cf.\ \cite[Lemma 2.4]{MMR}) and nonnegative, the strong maximum principle (see, for example, \cite[Theorem 3.5]{GT}) yields that $u>0$ in $B_1$. A positive Dirichlet eigenfunction of $-\Delta+qV_u$ is a first eigenfunction, so $\mu$ is the first eigenvalue of this operator, which is simple.

We claim that every normalized minimizer equals either $u$ or $-u$. Let $v\in H^1_0(B_1)$ with $\norm{v}_2=1$. Expanding the Coulomb term about $u^2$,
	\[
		\nl(v^2,v^2) = \nl(u^2,u^2) + \int_{B_1}V_u\,(v^2-u^2)\dd x + \nl(v^2-u^2,v^2-u^2),
	\]
and since $\int_{B_1}V_uu^2\dd x=\mu-\int_{B_1}|\nabla u|^2\dd x$, this gives the exact identity
	\begin{equation}\label{eq:square}
		\E(v,B_1)-\E(u,B_1) = \Big(\int_{B_1}\big(|\nabla v|^2+qV_uv^2\big)\dd x - \mu\Big) + q\nl(v^2-u^2,v^2-u^2).
	\end{equation}
The first term is nonnegative by the Rayleigh principle for $-\Delta+qV_u$, and vanishes only if $v$ is a first eigenfunction, that is, $v=\pm u$ by simplicity and normalization. The second term is nonnegative by \eqref{eq:coulomb-identities}. Hence, if $v$ is also a minimizer, both terms in \eqref{eq:square} vanish and $v=\pm u$. This proves the claim, and in particular there is exactly one nonnegative normalized minimizer, which we denote by $u_q$. Then $V_u=V_q$ and $\mu=\mu_q$.

Finally, every rotation of $u_q$ is again a nonnegative normalized minimizer, since $\E(\cdot,B_1)$ is rotation invariant, so $u_q$ is radial (cf. \cite[Proposition 2.9 and Corollary 2.10]{MMR}).

\medskip

(ii) Since $\psi\in H^1_0(B_1)$ is normalized, it is an admissible competitor in the minimization defining $\en(B_1)$, and since the Coulomb term is nonnegative and $\pi^2$ is the first Dirichlet eigenvalue of $B_1$, we have
	\begin{equation}\label{eq:ball-energy-bounds}
		\pi^2 \leq \int_{B_1}|\nabla u_q|^2\dd x \leq \E(u_q,B_1) = \en(B_1) \leq \E(\psi,B_1) = \pi^2 + q\nl(\psi^2,\psi^2) \leq \pi^2 + Cq.
	\end{equation}
Moreover $\nl(u_q^2,u_q^2)\leq C$, by the bound on $K(f^2)$ recalled above; hence $|\mu_q-\pi^2|\leq Cq$.

Now write $u_q=a\psi+\eta$ with $\eta\perp\psi$ in $L^2(B_1)$, so that $a\geq0$ and $a^2+\norm{\eta}_2^2=1$. Since $\eta$ is orthogonal to $\psi$ in the Dirichlet form as well, and since the second Dirichlet eigenvalue of $B_1$ exceeds $\pi^2$, we have
	\begin{equation}\label{eq:kinetic-excess}
		\int_{B_1}|\nabla u_q|^2\dd x - \pi^2 = \int_{B_1}|\nabla \eta|^2\dd x - \pi^2\norm{\eta}_2^2 \geq c\norm{\eta}_{H^1(B_1)}^2.
	\end{equation}
By the Lipschitzianity of the Coulomb energy, for $v,u$ in a bounded subset of $H^1(\R^3)$, we get
	\begin{equation}\label{eq:D-lip}
		\abs{\nl(v^2,v^2)-\nl(u^2,u^2)} = \abs{\nl(v^2-u^2,v^2+u^2)} \leq C\norm{v^2-u^2}_1 \leq C\norm{v-u}_2.
	\end{equation}
Since \eqref{eq:ball-energy-bounds} implies
	\[
		\int_{B_1}|\nabla u_q|^2\dd x - \pi^2 \leq q\big(\nl(\psi^2,\psi^2)-\nl(u_q^2,u_q^2)\big),
	\]
combining this with \eqref{eq:kinetic-excess}, \eqref{eq:D-lip}, and the bound $\norm{u_q-\psi}_2\leq(1-a)+\norm{\eta}_2\leq2\norm{\eta}_2$, we obtain
	\[
		c\norm{\eta}_{H^1(B_1)}^2 \leq Cq\norm{\eta}_2.
	\]
Hence $\norm{\eta}_{H^1(B_1)}\leq Cq$, $1-a\leq Cq^2$, and $\norm{u_q-\psi}_{H^1(B_1)}\leq Cq$.

The $C^2$ bound follows from elliptic regularity. By \eqref{eq:ball-state}, we have
	\begin{equation}\label{eq:ball-difference}
		(-\Delta-\pi^2)\eta = (-\Delta-\pi^2)(u_q-\psi) = (\mu_q-\pi^2)u_q - qV_qu_q =: f_q.
	\end{equation}
Since $|\mu_q-\pi^2|\leq Cq$ and $V_q$ is bounded, $\norm{f_q}_{L^2(B_1)}\leq Cq$. Hence, the global $H^2$ estimate \cite[Theorem 8.12]{GT} implies that $\norm{\eta}_{H^2(B_1)}\leq C(\norm{\eta}_2+\norm{f_q}_2)\leq Cq$. Since $H^2(B_1)\iarrow L^\infty(B_1)$ in three dimensions, this gives $\norm{\eta}_\infty\leq Cq$.  Hence, 
	\[
		\norm{u_q}_\infty\leq a\norm{\psi}_\infty+\norm{\eta}_\infty\leq C
	\]
uniformly in $q$. Consequently $f_q$ and $\pi^2\eta$ are bounded in $L^\infty(B_1)$ by $Cq$, and so we have that $\norm{f_q+\pi^2\eta}_{L^p(B_1)}\leq Cq$ for every $p<\infty$.

 Now, since $\eta\in H^1_0(B_1)$ solves $\Delta \eta=-f_q-\pi^2\eta$, by the $W^{2,p}$-theory for the Dirichlet problem (see \cite[Theorem 9.15 and Lemma 9.17]{GT}), we obtain 
 	\[
 		\norm{\eta}_{W^{2,p}(B_1)}\leq C\norm{f_q+\pi^2\eta}_{L^p(B_1)}\leq Cq.
 	\]
In particular, taking $p>3$ and using $W^{2,p}(B_1)\hookrightarrow C^1(\overline B_1)$, we get that $\nabla u_q$ is bounded uniformly in $q$. Since $u_q^2$ is bounded with support in $\ol{B}_1$, by \cite[Lemma 4.1]{GT}, $V_q\in C^1(\R^3)$ with $\norm{V_q}_{C^1}\leq C\norm{u_q}_\infty^2$. Hence, $\|f_q\|_{C^{0,\beta}(\ol{B}_1)}\leq C(|\mu_q-\pi^2|+q)\leq Cq$. We also have that $\|\pi^2\eta\|_{C^{0,\beta}(\ol{B}_1)}\leq C\|\eta\|_{W^{2,p}(B_1)}\leq Cq$. Now the Schauder theory for the Dirichlet problem on the ball (see \cite[Theorems 6.6 and 6.14]{GT}), applied once more to $\Delta \eta=-f_q-\pi^2\eta$, yields $\norm{\eta}_{C^{2,\beta}(\ol{B}_1)}\leq Cq$. Together with $1-a\leq Cq^2$ this proves $\norm{u_q-\psi}_{C^{2,\beta}(\ol{B}_1)}\leq Cq$.

\medskip

(iii) Since $u_q$ and $V_q$ are radial, in the radial variable $\rho=|x|$, equation \eqref{eq:ball-state} becomes
	\begin{equation}\label{eq:radial-ode}
		-\Psi'' - \frac{2}{\rho}\Psi' + \big(qV_q(\rho)-\mu_q\big)\Psi = 0,
	\end{equation}
and $u_q$ solves \eqref{eq:radial-ode} on $(0,1]$ with $u_q(1)=0$. Let $\Psi_q$ be the solution of \eqref{eq:radial-ode} on $(0,2]$ with $\Psi_q=u_q$ on $(0,1]$. On $[1,2]$ it is the solution of the Cauchy problem with data $\Psi_q(1)=0$ and $\Psi_q'(1)=u_q'(1)$, which exists on the whole interval since the equation is linear. We regard $\Psi_q$ as a radial function on $B_2$.

We now verify the bounds. On $\ol{B}_1$ we have $\Psi_q=u_q$, which is bounded in $C^2(\ol{B}_1)$ by \eqref{eq:ball-estimates}. On $[1/2,2]$ the coefficient $qV_q-\mu_q$ of \eqref{eq:radial-ode} is bounded in $C^1$ by \eqref{eq:ball-estimates}, and the Cauchy data at $\rho=1$ are bounded, so $\Psi_q$ is bounded in $C^3([1/2,2])$ uniformly in $q$ by continuous dependence. These match to second order at $\rho=1$ because both solve \eqref{eq:radial-ode}. Hence $\Psi_q\in C^2(\ol{B}_2)$, with the uniform bounds in $C^2(\ol{B}_2)$ and $C^3(\ol{B}_2\setminus B_{1/2})$ stated in (iii), and $\Psi_q$ solves \eqref{eq:continuation}.

Finally we compute the expansion at the sphere. Since $\Psi_q(1)=0$, evaluating \eqref{eq:radial-ode} at $\rho=1$ gives $\Psi_q''(1)=-2\Psi_q'(1)=2c_q$, so Taylor's formula with the uniform $C^3$ bound on $[1/2,3/2]$ yields
	\[
		\Psi_q(1+s) = -c_qs + c_qs^2 + O(|s|^3) \qquad\text{for } |s|\leq1/2,
	\]
and for $1/2\leq|s|\leq1$ the same expansion holds trivially, as both sides are $O(1)$. Together with $c_q=-u_q'(1)=-\psi'(1)+O(q)=\sqrt{\pi/2}+O(q)$, which follows from \eqref{eq:ball-estimates} and $\psi'(1)=-\sqrt{\pi/2}$, this proves \eqref{eq:boundary-taylor}.
\end{proof}

The next lemma is the main tool in the proof of Theorem~\ref{thm:local}. It gives an exact expression for the energy deficit with respect to the ball of an arbitrary normalized state on an arbitrary nearby set.

\begin{lemma}\label{lem:comparison}
Let $\Om\subset B_2$ be a bounded Lipschitz domain and let $v\in H^1_0(\Om)$ with $\norm{v}_2=1$. For every $\alpha\in\R$ set $r\defeq v-\alpha\Psi_q$ in $\Om$. Then
	\begin{multline}\label{eq:comparison}
		\E(v,\Om)-\en(B_1) = \int_\Om\big(|\nabla r|^2+(qV_q-\mu_q)r^2\big)\dd x \\ - \alpha^2\int_{\pt\Om}\Psi_q\,\pt_\nu\Psi_q\dd\Htwo(x) + q\nl(v^2-u_q^2,v^2-u_q^2).
	\end{multline}
\end{lemma}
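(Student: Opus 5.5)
The plan is to repeat the ``completing the Coulomb square'' computation of \eqref{eq:square}, now on the set $\Om$ instead of $B_1$, and then use the fact that $\Psi_q$ solves the frozen equation \eqref{eq:continuation} in $B_2\supset\Om$ to remove the cross terms by an integration by parts.

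\emph{Step 1: the Coulomb term.} Since $v$ and $u_q$ are extended by zero, the polarization used in the proof of Lemma~\ref{lem:ball}(i) holds verbatim:
\[
\nl(v^2,v^2)=\nl(u_q^2,u_q^2)+\int_{\R^3}V_q(v^2-u_q^2)\dd x+\nl(v^2-u_q^2,v^2-u_q^2).
\]
Here $\int V_qu_q^2\dd x=2\nl(u_q^2,u_q^2)$ and $\en(B_1)=\mu_q-q\nl(u_q^2,u_q^2)$. Together with $\norm{v}_2=1$ this gives
\[
\E(v,\Om)-\en(B_1)=\int_\Om\big(|\nabla v|^2+(qV_q-\mu_q)v^2\big)\dd x+q\nl(v^2-u_q^2,v^2-u_q^2).
\]
This step uses only that $V_q$ is defined and bounded on all of $\R^3$ by \eqref{eq:ball-estimates}.

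\emph{Step 2: expand the quadratic form.} Write $Q(f,g)\defeq\int_\Om(\nabla f\cdot\nabla g+(qV_q-\mu_q)fg)\dd x$ and insert $v=r+\alpha\Psi_q$ on $\Om$. This gives
\[
Q(v,v)=Q(r,r)+2\alpha Q(r,\Psi_q)+\alpha^2Q(\Psi_q,\Psi_q).
\]
Since $\Psi_q\in C^2(\ol B_2)$ and $\Om$ is Lipschitz, Green's formula together with \eqref{eq:continuation} yields, for any $f\in H^1(\Om)$,
\[
Q(f,\Psi_q)=\int_{\pt\Om}f\,\pt_\nu\Psi_q\dd\Htwo.
\]
With $f=\Psi_q$ this gives $\alpha^2\int_{\pt\Om}\Psi_q\pt_\nu\Psi_q$. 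With $f=r$ we use that the trace of $r$ on $\pt\Om$ equals $-\alpha\Psi_q$, because $v\in H^1_0(\Om)$ has zero trace. So $2\alpha Q(r,\Psi_q)=-2\alpha^2\int_{\pt\Om}\Psi_q\pt_\nu\Psi_q$. Summing the two contributions leaves $-\alpha^2\int_{\pt\Om}\Psi_q\pt_\nu\Psi_q$, which is exactly \eqref{eq:comparison}.

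\emph{Main obstacle.} The only delicate point is justifying the Green formula and the trace identity. This needs $\Om\subset B_2$, so that $\Psi_q$ is $C^2$ up to $\pt\Om$; it also needs $\Om$ Lipschitz, so that traces and the normal $\nu$ exist $\Htwo$-a.e. Finally, $v\in H^1_0(\Om)$ must mean zero trace, which holds on Lipschitz domains. No sign or regularity of $v$ beyond $H^1_0$ is needed, and the identity is exact for every $\alpha$.
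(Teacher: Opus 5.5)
Your proposal is correct and follows the paper's proof essentially line by line: you complete the Coulomb square about $u_q^2$ to reach the identity with the frozen potential $qV_q-\mu_q$, then expand $v=\alpha\Psi_q+r$. Green's formula with \eqref{eq:continuation} and the vanishing trace of $v$ turn the pure and mixed terms into $\alpha^2\int_{\pt\Om}\Psi_q\pt_\nu\Psi_q$ and $-2\alpha^2\int_{\pt\Om}\Psi_q\pt_\nu\Psi_q$. (Cosmetic only: your bilinear form $Q(f,g)$ clashes with the Robin form $Q$ of \eqref{eq:Q}, so rename it.)
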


\begin{proof}
Expanding the quadratic form $\nl$ about $u_q^2$, for the zero extensions of $v$ and $u_q$ we have
	\[
		\nl(v^2,v^2) = \nl(u_q^2,u_q^2) + \int_\Om V_qv^2\dd x - \int_{B_1}V_qu_q^2\dd x + \nl(v^2-u_q^2,v^2-u_q^2),
	\]
where we used $2\nl(u_q^2,f)=\int_{\R^3}V_qf\dd x$, by \eqref{eq:coulomb-identities}. By the definition of $\mu_q$, using $\norm{v}_2=1$, we have $q\int_{B_1}V_qu_q^2\dd x-q\nl(u_q^2,u_q^2)=\mu_q\int_\Om v^2\dd x -\en(B_1)$. Hence,
	\begin{equation}\label{eq:completed-square}
		\E(v,\Om)-\en(B_1) = \int_\Om\big(|\nabla v|^2+(qV_q-\mu_q)v^2\big)\dd x + q\nl(v^2-u_q^2,v^2-u_q^2).
	\end{equation}
Now we expand $v=\alpha\Psi_q+r$ in the quadratic form in \eqref{eq:completed-square}. Since $\Psi_q\in C^2(\ol{\Om})$ solves \eqref{eq:continuation}, integration by parts on the Lipschitz domain $\Om$ gives
	\begin{gather*}
		\int_\Om\big(|\nabla\Psi_q|^2+(qV_q-\mu_q)\Psi_q^2\big)\dd x = \int_{\pt\Om}\Psi_q\,\pt_\nu\Psi_q\dd\Htwo(x),\\
		\int_\Om\big(\nabla\Psi_q\cdot\nabla r+(qV_q-\mu_q)\Psi_qr\big)\dd x = \int_{\pt\Om}r\,\pt_\nu\Psi_q\dd\Htwo(x).
	\end{gather*}
On $\pt\Om$ the trace of $v$ vanishes, so that $r=-\alpha\Psi_q$ there, and the mixed term equals
	\[
		2\alpha\int_{\pt\Om}r\,\pt_\nu\Psi_q\dd\Htwo(x) = -2\alpha^2\int_{\pt\Om}\Psi_q\,\pt_\nu\Psi_q\dd\Htwo(x).
	\]
Adding these three contributions we obtain \eqref{eq:comparison}.
\end{proof}

\medskip

\begin{remark}
Here we note that $\Psi_q$ is not the zero extension of $u_q$. Indeed, by \eqref{eq:boundary-taylor} it changes sign just outside the ball. This, however, does not cause any difficulty since it is just a comparison function and not an admissible state, and no division by $\Psi_q$ is used anywhere.
\end{remark}

%%%%%%%%%%%%%%%%%%%%%%%%%%%%%%%%%%%%%%%%%%%%%%%%%%%%%%%%%%%%%%%%%%%%%%%%%%%%%%
%%%%%%%%%%%%%%%%%%%%%%%%%%%%%%%%%%%%%%%%%%%%%%%%%%%%%%%%%%%%%%%%%%%%%%%%%%%%%%
\section{The boundary extension and the Robin form}\label{sec:fixed}
%%%%%%%%%%%%%%%%%%%%%%%%%%%%%%%%%%%%%%%%%%%%%%%%%%%%%%%%%%%%%%%%%%%%%%%%%%%%%%
%%%%%%%%%%%%%%%%%%%%%%%%%%%%%%%%%%%%%%%%%%%%%%%%%%%%%%%%%%%%%%%%%%%%%%%%%%%%%%

The identity of Lemma~\ref{lem:comparison} is stated on the varying set $\Om$, whereas its lower bound will be obtained on the fixed ball $B_1$. In this section we collect the two ingredients needed for the passage from one to the other: a diffeomorphism from $B_1$ onto $\Om_\vphi$, under which the remainder in \eqref{eq:comparison} is transported, and a coercivity estimate on $B_1$ for the Robin form which the transported identity produces in the limit $q\to0$, $\vphi\to0$.

The first lemma extends the boundary map $\theta\mapsto(1+\vphi(\theta))\theta$ to a diffeomorphism of $\R^3$ in such a way that the extension is controlled by the strong norm $\norm{\vphi}_X$ in $C^1$, which keeps the change of variables uniformly bi-Lipschitz, and by the weak norm $\norm{\vphi}_{H^{1/2}}$ in $H^1$, which is what makes the transport error in Section~\ref{sec:proof} quadratic.

\begin{lemma}\label{lem:extension}
There exists a linear map $\vphi\mapsto T_\vphi-\id$, from functions on $\pt B_1$ to vector fields on $\R^3$ supported in $B_2$, such that $T_\vphi(\theta)=(1+\vphi(\theta))\theta$ on $\pt B_1$ and
	\begin{equation}\label{eq:map-bounds}
		\norm{T_\vphi-\id}_{C^{1,\beta}(\R^3)}\leq C\norm{\vphi}_X, \qquad \norm{T_\vphi-\id}_{H^1(\R^3)}\leq C\norm{\vphi}_{H^{1/2}}.
	\end{equation}
For $\norm{\vphi}_X$ sufficiently small, $T_\vphi$ is a $C^1$ diffeomorphism of $\R^3$, bi-Lipschitz with constants as close to one as necessary, and $T_\vphi(B_1)=\Om_\vphi$.
\end{lemma}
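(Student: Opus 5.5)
We construct $T_\vphi$ through a harmonic-type extension of $\vphi$ followed by a cutoff. Let $E\vphi$ denote an extension of $\vphi$ from $\pt B_1$ to $\R^3$ that is linear in $\vphi$. A concrete choice: take the harmonic extension $h$ of $\vphi$ into $B_1$, and outside $B_1$ use the Kelvin reflection $h^*(x)\defeq h(x/|x|^2)$. Alternatively, take $\tilde h(r\theta)\defeq (P_{t(r)}\vphi)(\theta)$ with $t(r)=|1-r|$, where $P_t$ is the Poisson/heat semigroup on the sphere. Then multiply by a fixed radial cutoff $\chi\in C_c^\infty(B_2)$ which equals $1$ near $\pt B_1$ and vanishes near the origin, and set
\[
T_\vphi(x)-x\defeq \chi(x)\,(E\vphi)(x)\,\frac{x}{|x|}.
\]
Linearity in $\vphi$, support in $B_2$, and the boundary condition $T_\vphi(\theta)=(1+\vphi(\theta))\theta$ hold by construction. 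The cutoff near the origin removes the singularity of $x/|x|$.

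For the bounds \eqref{eq:map-bounds}, the $H^1$ estimate is the classical trace theorem in reverse. The harmonic (or Poisson-semigroup) extension satisfies
\[
\norm{E\vphi}_{H^1(B_2\setminus B_{1/2})}\leq C\norm{\vphi}_{H^{1/2}(\pt B_1)},
\]
which can be checked via spherical harmonics: if $\vphi=\sum a_{k}Y_k$, then $\int_0^1 (k^2 e^{-2kt}+k^2e^{-2kt})\,dt\sim k$. Multiplication by the smooth factor $\chi\,x/|x|$ preserves this bound. The $C^{1,\beta}$ estimate is boundary Schauder theory for the Dirichlet problem: $\norm{h}_{C^{1,\beta}(\ol B_1)}\leq C\norm{\vphi}_{C^{1,\beta}(\pt B_1)}$. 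The Kelvin reflection is smooth in the annulus, so the same holds outside, and the two pieces glue continuously across $\pt B_1$ with equal tangential derivatives. The main subtlety is this gluing: the normal derivatives of $h$ and $h^*$ across the sphere differ, since $\pt_r h^*=-\pt_r h$ at $r=1$. So the reflected extension is only Lipschitz across $\pt B_1$, not $C^{1,\beta}$.

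To fix this, I would instead use the reflection $h^\sharp(r\theta)\defeq 2h(\theta)-h(\theta/r)$... but that is not harmonic. The simplest robust alternative is a higher-order reflection. For $r>1$ define
\[
E\vphi(r\theta)\defeq 3h\big((2-r)\theta\big)-2h\big((3-2r)\theta\big)
\]
for $r\in[1,3/2]$, which is well defined after cutoff. This is the standard Hestenes/Seeley reflection: its coefficients match the value and first radial derivative at $r=1$, so it is $C^{1,\beta}$ across the sphere with norm bounded by that of $h$ on $B_1\setminus B_{1/2}$. It is also bounded in $H^1$ by $\norm{h}_{H^1}$, since it is a finite sum of compositions with smooth maps. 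This settles both parts of \eqref{eq:map-bounds}.

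Finally, for $\norm{\vphi}_X$ small, $\norm{DT_\vphi-I}_\infty\leq C\norm{\vphi}_X$ is small. Then $T_\vphi$ is a local diffeomorphism, and it is injective because $|T_\vphi x-T_\vphi y-(x-y)|\leq C\delta|x-y|$. It equals the identity outside $B_2$, so it is proper, and hence it is a global $C^1$ diffeomorphism with bi-Lipschitz constants $1\pm C\delta$. The identity $T_\vphi(B_1)=\Om_\vphi$ follows because $T_\vphi$ maps $\pt B_1$ bijectively onto $\pt\Om_\vphi$, and a homeomorphism of $\R^3$ maps the bounded component of the complement of $\pt B_1$ onto the bounded component of the complement of the image. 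Equivalently, along each ray $T_\vphi$ acts radially, $r\theta\mapsto (r+\chi(r)E\vphi(r\theta))\theta$, with small derivative in $r$, so it is monotone in $r$.
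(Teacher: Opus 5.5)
Your construction is correct and is essentially the paper's: a harmonic extension into $B_1$, with boundary Schauder theory for the $C^{1,\beta}$ bound and spherical harmonics for the $H^1$ bound, then a Hestenes-type reflection across $\pt B_1$ matching value and first radial derivative, then the perturbation-of-identity argument for the diffeomorphism (your two-term reflection suffices, while the paper uses three terms and a cutoff supported in $B_{7/6}$). The one real variation is that you extend the scalar $\vphi$ and displace radially by $\chi\,E\vphi\,x/|x|$, instead of extending the vector field $\vphi(\theta)\theta$ componentwise; this needs a cutoff near the origin but leaves every ray invariant, so $T_\vphi(B_1)=\Om_\vphi$ follows from monotonicity in $r$ rather than from the paper's bounded-component argument.
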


\begin{proof}
We first extend the boundary field $\theta\mapsto\vphi(\theta)\theta$ from $\pt B_1$ to $B_1$. Let $\xi_0$ be its componentwise harmonic extension. Expanding each component in spherical harmonics gives $\norm{\xi_0}_{H^1(B_1)}\leq C\norm{\vphi}_{H^{1/2}}$. For the H\"older bound, let $\eta$ be a smooth function on $[0,\infty)$ vanishing on $[0,1/4]$ and equal to one on $[1/2,\infty)$, and let $\Phi(x)\defeq\eta(|x|)\vphi(x/|x|)x$. Then $\Phi\in C^{1,\beta}(\ol{B}_1)$ coincides with $\vphi(\theta)\theta$ on $\pt B_1$ and satisfies $\norm{\Phi}_{C^{1,\beta}(\ol{B}_1)}\leq C\norm{\vphi}_X$. Since $\xi_0-\Phi\in H^1_0(B_1)$ and $\Delta\xi_0=0$, \cite[Theorems 8.33 and 8.34]{GT} give $\xi_0\in C^{1,\beta}(\ol{B}_1)$ with
	\[
		\norm{\xi_0}_{C^{1,\beta}(\ol{B}_1)} \leq C\big(\norm{\xi_0}_{L^\infty(B_1)}+\norm{\Phi}_{C^{1,\beta}(\ol{B}_1)}\big) \leq C\norm{\vphi}_X,
	\]
where $\norm{\xi_0}_{L^\infty(B_1)}\leq\norm{\vphi}_{L^\infty}$ by the maximum principle.

We next extend $\xi_0$ from $B_1$ to $\R^3$ across the sphere, by a reflection which preserves both bounds. Write $x=\rho\theta$ with $\rho=|x|$ and $\theta\in\pt B_1$, and let $\chi$ be a smooth function on $[0,\infty)$ equal to one on $[0,13/12]$ and vanishing on $[7/6,\infty)$. For each component $w$ of $\xi_0$ set
	\[
		\tilde w(\rho\theta) \defeq
		\begin{cases}
			w(\rho\theta) & \text{if } \rho\leq1,\\
			\chi(\rho)\big(6w((2-\rho)\theta) - 8w((3-2\rho)\theta) + 3w((4-3\rho)\theta)\big) & \text{if } \rho>1,
		\end{cases}
	\]
and let $\xi$ be the vector field with components $\tilde w$. The two expressions agree on $\pt B_1$ together with their first radial derivatives, so that $\xi\in C^{1,\beta}(\R^3)$, and $\xi\in H^1(\R^3)$ as well. Each of the three reflected terms is obtained from $w$ by a change of the radial variable with bounded Jacobian, hence
	\[
		\norm{\xi}_{C^{1,\beta}(\R^3)} \leq C\norm{\xi_0}_{C^{1,\beta}(\ol{B}_1)}, \qquad
		\norm{\xi}_{H^1(\R^3)} \leq C\norm{\xi_0}_{H^1(B_1)}.
	\]
By construction $\xi$ is supported in $B_{7/6}\subset B_2$, it is linear in $\vphi$, and $\xi(\theta)=\vphi(\theta)\theta$ on $\pt B_1$. Setting 
	\[
		T_\vphi\defeq\id+\xi,
	\]
the two bounds in \eqref{eq:map-bounds} follow from the estimates above.

Finally, if $\norm{\vphi}_X$ is small, then $\norm{D\xi}_\infty\leq1/2$, so $T_\vphi$ is a $C^1$ map with 
	\[
		\frac12|x-y|\leq|T_\vphi(x)-T_\vphi(y)|\leq\frac32|x-y|.
	\]
Since $\xi$ has compact support, $T_\vphi$ is proper, hence a bi-Lipschitz diffeomorphism of $\R^3$, with constants as close to one as desired. Furthermore, it maps $\pt B_1$ onto $\pt\Om_\vphi$ by the boundary identity, and since $\Om_\vphi$ is the bounded component of $\R^3\setminus\pt\Om_\vphi$, it maps $B_1$ onto $\Om_\vphi$.
\end{proof}

\bigskip

For $z\in H^1(B_1)$ we consider the Robin form
	\begin{equation}\label{eq:Q}
		Q(z) \defeq \int_{B_1}\big(|\nabla z|^2-\pi^2z^2\big)\dd x + 2\int_{\pt B_1}z^2\dd\Htwo(\theta).
	\end{equation}
It is the limit, as $q\to0$ and $\vphi\to0$, of the quadratic part of the comparison identity \eqref{eq:comparison} (cf. \eqref{eq:boundary-comparison} and \eqref{eq:bulk-comparison} below). Here the coefficient $2$ is the ratio $-\Psi_q''(1)/\Psi_q'(1)$ of the two Taylor coefficients in \eqref{eq:boundary-taylor}. Note that the form $Q$ is not positive on all of $H^1(B_1)$: it vanishes on the radial direction $\psi$ and on the three translations $\pt_i\psi$. The following lemma retains these directions and quantifies their defect.

Before we state the lemma, let us introduce the necessary notation. For $\ell\in\N\cup\{0\}$ we denote by $\mathcal{S}_\ell$ the space of spherical harmonics of degree $\ell$ on $\pt B_1$ and by $\{Y_\ell^m\}_{m=1}^{2\ell+1}$ an orthonormal basis of $\mathcal{S}_\ell$ in $L^2(\pt B_1)$, chosen so that $Y_0^1=(4\pi)^{-1/2}$ and $Y_1^m(\theta)=\sqrt{3/(4\pi)}\,\theta_m$ for $m=1,2,3$. The family $\{Y_\ell^m\}_{\ell,m}$ is an orthonormal basis of $L^2(\pt B_1)$, and $-\Delta_{\pt B_1}Y_\ell^m=\ell(\ell+1)Y_\ell^m$. For $z\in H^1(B_1)$ and $0<\rho\leq1$ we write
	\[
		z_\ell^m(\rho) \defeq \int_{\pt B_1} z(\rho\theta)\,Y_\ell^m(\theta)\dd\Htwo(\theta)
	\]
for the Fourier coefficient of $z$ on the sphere of radius $\rho$, so that $z(\rho\theta)=\sum_{\ell,m}z_\ell^m(\rho)Y_\ell^m(\theta)$ in $L^2(\pt B_1)$.

\begin{lemma}\label{lem:robin}
There exist $\kappa>0$ and $C>0$ such that, for every $z\in H^1(B_1)$,
	\begin{equation}\label{eq:robin}
		Q(z) \geq \kappa\norm{z}_{H^1(B_1)}^2 - C\bigg(\Big|\int_{B_1}\psi z\dd x\Big|^2 + \Big|\int_{\pt B_1}z\dd\Htwo(\theta)\Big|^2 + \Big|\int_{\pt B_1}z\,\theta\dd\Htwo(\theta)\Big|^2\bigg).
	\end{equation}
\end{lemma}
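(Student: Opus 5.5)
The plan is to prove coercivity of $Q$ on the closed subspace
\[
W\defeq\Big\{z\in H^1(B_1)\colon \int_{B_1}\psi z\dd x=0,\ \int_{\pt B_1}z\dd\Htwo(\theta)=0,\ \int_{\pt B_1}z\,\theta\dd\Htwo(\theta)=0\Big\},
\]
and then to reach a general $z$ by subtracting a fixed finite-dimensional correction.

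\textbf{Reduction to $W$.} The five functionals $L_0(z)=\int_{B_1}\psi z$, $L_1(z)=\int_{\pt B_1}z$ and $L_{1+i}(z)=\int_{\pt B_1}z\,\theta_i$ are linearly independent on $H^1(B_1)$. Hence I can fix smooth functions $e_0,\dots,e_4$ with $L_k(e_j)=\delta_{kj}$. Put $w\defeq z-\sum_jL_j(z)e_j\in W$ and $|L|^2\defeq\sum_j|L_j(z)|^2$. Suppose $Q(w)\ge\kappa_0\norm{w}_{H^1}^2$ on $W$. Let $B$ be the bilinear form of $Q$; it is bounded on $H^1(B_1)$ by the trace inequality. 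Then
\[
Q(z)=Q(w)+2\sum_jL_j(z)B(w,e_j)+Q\Big(\sum_jL_j(z)e_j\Big)\ge \kappa_0\norm{w}^2-C\norm{w}|L|-C|L|^2 .
\]
By Young's inequality this is at least $\tfrac{\kappa_0}{2}\norm{w}^2-C|L|^2$. Since $\norm{z}^2\le2\norm{w}^2+C|L|^2$, this yields \eqref{eq:robin}.

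\textbf{Coercivity on $W$.} I expand $z=\sum z_\ell^m(\rho)Y_\ell^m$. The form $Q$ decouples, since the gradient term and the boundary term are both diagonal in $(\ell,m)$:
\[
Q(z)=\sum_{\ell,m}q_\ell(z_\ell^m),\qquad q_\ell(f)=\int_0^1\Big(f'^2+\tfrac{\ell(\ell+1)}{\rho^2}f^2-\pi^2f^2\Big)\rho^2\dd\rho+2f(1)^2 .
\]
Each sector is then handled separately. The real danger is $\ell=0$. There the Robin problem with parameter $2$ has eigenvalue $k^2$ with $\tan k=-k$, so $k\approx2.03$ and $k^2<\pi^2$; thus $Q$ genuinely has a negative direction. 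The constraints defining $W$ are what remove it.

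For $\ell=0$: membership in $W$ forces $f(1)=0$ and $f\perp\psi$. So $f$ lies in the radial Dirichlet space orthogonal to the first eigenfunction, and $q_0(f)\ge(1-\pi^2/(4\pi^2))\int f'^2\rho^2$, using the second radial Dirichlet eigenvalue $4\pi^2$.

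For $\ell=1$: $W$ forces $f(1)=0$. The Dirichlet eigenvalue in this sector is $j_{3/2,1}^2\approx20.19>\pi^2$, which gives coercivity in the same way.

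For $\ell\ge2$: I drop the nonnegative boundary term and compare with the Neumann problem in sector $\ell$. Its first eigenvalue is the square of the first positive zero of $j_\ell'$; for $\ell=2$ this is about $3.34^2\approx11.2>\pi^2$, and it increases in $\ell$. One can see the increase from the monotone term $\ell(\ell+1)/\rho^2$, or note it directly: $q_\ell\ge q_2+(\ell(\ell+1)-6)\int f^2$.

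To upgrade spectral positivity to $H^1$ control uniformly in each sector, I use the convex-combination trick. If $\int(|\nabla f|^2)\ge\lambda\int f^2$ with $\lambda>\pi^2$, then $\int|\nabla f|^2-\pi^2\int f^2\ge(1-\pi^2/\lambda)\int|\nabla f|^2\ge c\norm{f}^2_{H^1}$, where the $\ell$-dependent angular part is included in the gradient. Summing over $(\ell,m)$ gives $Q(w)\ge\kappa_0\norm{w}_{H^1}^2$.

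\textbf{Main obstacle.} The step I expect to be most delicate is the $\ell\ge2$ bound: verifying rigorously that the Neumann (or Robin with parameter $2$) first eigenvalue in sector $\ell=2$ exceeds $\pi^2$. This means locating the first zero of $j_2'$ numerically or through an explicit Bessel inequality, together with the monotonicity in $\ell$. If that margin proved too thin, I would keep the boundary term $2f(1)^2$ and work with the Robin eigenvalue directly. Its characteristic equation is $k\,j_2'(k)+2j_2(k)=0$, and its root lies above the Neumann one, which gives extra room.
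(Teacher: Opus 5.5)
Your proof is correct. The reduction to the constrained subspace (your dual functions $e_j$ play the role of the paper's explicit correction $\lambda_0+\lambda_1\cdot x+\lambda_2\psi$), the decoupling over spherical harmonics, and the degree-zero bound via $f(1)=0$ and the second radial Dirichlet eigenvalue $4\pi^2$ are the same as in the paper. The genuine difference is in degrees $\ell\geq2$.

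\textbf{Degrees $\ell\geq2$.} You drop the boundary term and use the Neumann gap in sector $2$, namely $(x'_{2,1})^2\approx11.17>\pi^2$, where $x'_{2,1}\approx3.342$ is the first positive zero of $j_2'$, together with monotonicity in $\ell$. The paper instead substitutes $a=pv$ with the translation profile $p=\psi'$. This profile solves $-(\rho^2p')'+(2-\pi^2\rho^2)p=0$ with $p'(1)=-2p(1)$, and the substitution gives the exact identity
\[
Q_\ell(a)=\int_0^1\rho^2p^2|(a/p)'|^2\dd\rho+(\ell(\ell+1)-2)\int_0^1a^2\dd\rho .
\]
In this identity the Robin term $2a(1)^2$ is cancelled exactly by the boundary contribution at $\rho=1$.

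\textbf{What each route buys.} The paper's identity needs no special-function facts and gives $Q_\ell(a)\geq4\int_0^1a^2\dd\rho$ at once for $\ell\geq2$. At $\ell=1$ it shows $Q_1\geq0$ with kernel spanned by $p$, which explains structurally why translations are the only degenerate directions outside $\ell=0$. Your route is the more standard spectral comparison. It also shows that for $\ell\geq2$ the Robin term is dispensable: the radial kinetic energy supplies the margin that the angular term $\ell(\ell+1)=6<\pi^2$ alone lacks. So the paper's remark that in degree $2$ the coercivity ``has to come from the boundary term'' is an overstatement.

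\textbf{The step you flag as the main obstacle.} It is harmless. Set $g(x)=x^4j_2'(x)=(4x^2-9)\sin x+x(9-x^2)\cos x$. Then $g'(x)=x\big((x^2-1)\sin x+x\cos x\big)$, and the second factor behaves like $\tfrac23x^3$ near $0$ and equals $-\pi$ at $\pi$. It changes sign exactly once on $(0,\pi]$, because its derivative is $x(\sin x+x\cos x)$, which changes sign once. Hence $g$ increases and then decreases on $(0,\pi]$, with $g(0)=0$ and $g(\pi)=\pi(\pi^2-9)>0$. So $j_2'>0$ on $(0,\pi]$, and no fallback to the Robin eigenvalue is needed.

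\textbf{Degree one.} You need no Bessel zero here either. Adding $2\int_0^1f^2\dd\rho\geq2\int_0^1\rho^2f^2\dd\rho$ to the first radial Dirichlet bound already gives a Rayleigh quotient of at least $\pi^2+2$, which is what the paper does.
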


\begin{proof}
For a function $a$ on $(0,1]$ and $\ell\in\N\cup\{0\}$ set
	\begin{align*}
		Q_\ell(a) &\defeq \int_0^1\big(\rho^2|a'|^2+\big(\ell(\ell+1)-\pi^2\rho^2\big)a^2\big)\dd\rho + 2a(1)^2, \\
		N_\ell(a) &\defeq \int_0^1\big(\rho^2|a'|^2+\big(\ell(\ell+1)+\rho^2\big)a^2\big)\dd\rho.
	\end{align*}
By the orthonormality of the $Y_\ell^m$ and the identity $-\Delta_{\pt B_1}Y_\ell^m=\ell(\ell+1)Y_\ell^m$, the Dirichlet integral, the $L^2$ norm, and the trace norm of $z$ decouple over the modes, and
	\begin{equation}\label{eq:Q-modes}
		Q(z) = \sum_{\ell=0}^\infty\sum_{m=1}^{2\ell+1}Q_\ell(z_\ell^m), \qquad \norm{z}_{H^1(B_1)}^2 = \sum_{\ell=0}^\infty\sum_{m=1}^{2\ell+1}N_\ell(z_\ell^m).
	\end{equation}
Thus $Q_\ell(a)=Q(aY_\ell^m)$ and $N_\ell(a)=\norm{aY_\ell^m}_{H^1(B_1)}^2$ for every $m$, and \eqref{eq:Q-modes} says that the modes are orthogonal for both forms. These identities hold for smooth $z$ and extend to $H^1(B_1)$ by density. In particular every coefficient $z_\ell^m$ belongs to $H^1_{\rm loc}((0,1])$ and satisfies $N_\ell(z_\ell^m)<\infty$.

Since $Y_0^1=(4\pi)^{-1/2}$, $Y_1^m(\theta)=\sqrt{3/(4\pi)}\,\theta_m$, and $\psi$ is radial, the three quantities on the right-hand side of \eqref{eq:robin} are
	\[
		\int_{\pt B_1}z\dd\Htwo(\theta) = \sqrt{4\pi}\,z_0^1(1), \quad
		\int_{\pt B_1}z\,\theta_m\dd\Htwo(\theta) = \sqrt{\tfrac{4\pi}3}\,z_1^m(1), \quad
		\int_{B_1}\psi z\dd x = \sqrt{4\pi}\int_0^1\rho^2\psi\,z_0^1\dd\rho.
	\]
We first prove the lemma when these three quantities vanish. By \eqref{eq:Q-modes} it suffices to show that there is $c>0$ such that $Q_\ell(a)\geq cN_\ell(a)$ for every $\ell$ and every $a\in H^1_{\rm loc}((0,1])$ with $N_\ell(a)<\infty$, subject to $a(1)=0$ and $\int_0^1\rho^2\psi a\dd\rho=0$ when $\ell=0$, and to $a(1)=0$ when $\ell=1$.

\smallskip

\emph{Degree zero.} Let $a(1)=0$ and $\int_0^1\rho^2\psi a\dd\rho=0$. The radial function $a(|x|)$ belongs to $H^1_0(B_1)$ and is $L^2$-orthogonal to $\psi$. The Dirichlet eigenvalues of $B_1$ on radial functions are $(k\pi)^2$, $k\in\N$, with eigenfunctions $\sin(k\pi\rho)/\rho$, so the min-max principle in the radial sector gives $\int_0^1\rho^2|a'|^2\dd\rho\geq4\pi^2\int_0^1\rho^2a^2\dd\rho$. Hence,
	\[
		Q_0(a) = \int_0^1\rho^2|a'|^2\dd\rho - \pi^2\int_0^1\rho^2a^2\dd\rho \geq \frac34\int_0^1\rho^2|a'|^2\dd\rho \geq \frac{3\pi^2}{1+4\pi^2}\,N_0(a).
	\]
	
\smallskip

\emph{Degree one.} Let $a(1)=0$. Again $a(|x|)\in H^1_0(B_1)$, so the first Dirichlet eigenvalue of $B_1$ gives $\int_0^1\rho^2|a'|^2\dd\rho\geq\pi^2\int_0^1\rho^2a^2\dd\rho$, while trivially $2\int_0^1a^2\dd\rho\geq2\int_0^1\rho^2a^2\dd\rho$. Adding, we obtain
	\[
		Q_1(a) = \int_0^1\rho^2|a'|^2\dd\rho + \int_0^1(2-\pi^2\rho^2)a^2\dd\rho \geq 2\int_0^1\rho^2a^2\dd\rho,
	\]
and since $N_1(a)=Q_1(a)+(\pi^2+1)\int_0^1\rho^2a^2\dd\rho$ this yields $Q_1(a)\geq\frac{2}{\pi^2+3}N_1(a)$.

\smallskip

\emph{Degree at least two.} The hypotheses of the lemma do not restrict these modes, and in particular the value $a(1)$ is arbitrary. Since $\ell(\ell+1)<\pi^2$ for $\ell=2$, the angular term alone does not control the negative term in $Q_\ell$, and the coercivity has to come from the boundary term $2a(1)^2$.

To this end, let $p\defeq\psi'$ denote the radial profile of the translation modes $\pt_i\psi(x)=p(\rho)\theta_i$. Differentiating the radial equation $-(\rho^2\psi')'=\pi^2\rho^2\psi$ shows that
	\begin{equation}\label{eq:p-equation}
		-(\rho^2p')' + (2-\pi^2\rho^2)p = 0 \quad\text{on }(0,1],
	\end{equation}
and evaluating at $\rho=1$, where $\psi$ vanishes, yields $p'(1)=-2p(1)$. Moreover, by the explicit form of $\psi$ given in \eqref{eq:first-eigenfunction}, we see that $p<0$ on $(0,1]$.

We claim that, for every $\ell\geq0$ and every $a\in H^1_{\rm loc}((0,1])$ with $N_\ell(a)<\infty$,
	\begin{equation}\label{eq:factorization}
		Q_\ell(a) = \int_0^1\rho^2p^2\Big|\Big(\frac ap\Big)'\Big|^2\dd\rho + \big(\ell(\ell+1)-2\big)\int_0^1a^2\dd\rho.
	\end{equation}
To see this set $v\defeq a/p$, which is well-defined and belongs to $H^1_{\loc}((0,1])$ since $p$ does not vanish there. Then $\rho^2|a'|^2=\rho^2p^2|v'|^2+\rho^2(p')^2v^2+\rho^2pp'(v^2)'$, and integrating the last term by parts on $(\e,1)$, with $(\rho^2pp')'=(\rho^2p')'p+\rho^2(p')^2$, we get
	\[
		\int_\e^1\rho^2|a'|^2\dd\rho = \int_\e^1\rho^2p^2|v'|^2\dd\rho + \big[\rho^2pp'v^2\big]_\e^1 - \int_\e^1(\rho^2p')'p\,v^2\dd\rho.
	\]
By \eqref{eq:p-equation} the last integral equals $\int_\e^1(2-\pi^2\rho^2)a^2\dd\rho$. The boundary term at $\rho=1$ equals $p'(1)p(1)^{-1}a(1)^2=-2a(1)^2$, which cancels the term $2a(1)^2$ in $Q_\ell(a)$. The boundary term at $\rho=\e$ equals $\e^2p'(\e)p(\e)^{-1}a(\e)^2$, and $\e^2p'(\e)p(\e)^{-1}=O(\e)$ as $\e\to0$. Since $\int_0^1a^2\dd\rho<\infty$, there is a sequence $\e_k\to0$ along which $\e_ka(\e_k)^2\to0$. Letting $k\to\infty$ in the identity with $\e=\e_k$ proves \eqref{eq:factorization}.

Now for $\ell\geq2$ we have $\ell(\ell+1)-2\geq4$, and the first term in \eqref{eq:factorization} is nonnegative, so that
	\[
		Q_\ell(a) \geq 4\int_0^1a^2\dd\rho \geq 4\int_0^1\rho^2a^2\dd\rho.
	\]
Since $N_\ell(a)\leq Q_\ell(a)+(\pi^2+1)\int_0^1\rho^2a^2\dd\rho$ this yields $Q_\ell(a)\geq\frac{4}{\pi^2+5}N_\ell(a)$.

Summing the three estimates over the modes in \eqref{eq:Q-modes} proves 
	\[
		Q(z)\geq\kappa_0\norm{z}_{H^1(B_1)}^2,
	\]
with $\kappa_0\defeq\frac{2}{\pi^2+3}$, whenever the three quantities in \eqref{eq:robin} vanish.

In the general case we subtract from $z$ a combination of the low modes. Namely, we set
	\[
		\lambda_0 \defeq \frac1{4\pi}\int_{\pt B_1}z\dd\Htwo(\theta), \qquad
		\lambda_1 \defeq \frac3{4\pi}\int_{\pt B_1}z\,\theta\dd\Htwo(\theta), \qquad
		\lambda_2 \defeq \int_{B_1}\psi z\dd x - \lambda_0\int_{B_1}\psi\dd x,
	\]
and define
	\[
		w\defeq\lambda_0+\lambda_1\cdot x+\lambda_2\psi \qquad \text{and} \qquad \zeta\defeq z-w.
	\]
Since $\psi$ vanishes on $\pt B_1$, $\int_{\pt B_1}\theta\dd\Htwo(\theta)=0$, $\int_{\pt B_1}\theta\otimes\theta\dd\Htwo(\theta)=\frac{4\pi}3\,\id$, $\int_{B_1}\psi\,x\dd x=0$, and $\int_{B_1}\psi^2\dd x=1$, it follows that the three quantities in \eqref{eq:robin} vanish for $\zeta$. Hence, $Q(\zeta)\geq\kappa_0\norm{\zeta}_{H^1(B_1)}^2$. Moreover
	\begin{multline*}
		\norm{w}_{H^1(B_1)}^2 \leq C\big(|\lambda_0|^2+|\lambda_1|^2+|\lambda_2|^2\big) \\ \leq C\bigg(\Big|\int_{B_1}\psi z\dd x\Big|^2 + \Big|\int_{\pt B_1}z\dd\Htwo(\theta)\Big|^2 + \Big|\int_{\pt B_1}z\,\theta\dd\Htwo(\theta)\Big|^2\bigg).
	\end{multline*}
Since $Q$ is a bounded quadratic form on $H^1(B_1)$, Young's inequality gives
	\begin{multline*}
		Q(z) = Q(\zeta) + 2\int_{B_1}\big(\nabla\zeta\cdot\nabla w-\pi^2\zeta w\big)\dd x + 4\int_{\pt B_1}\zeta w\dd\Htwo(\theta) + Q(w)
		\\ \geq \frac{\kappa_0}2\norm{\zeta}_{H^1(B_1)}^2 - C\norm{w}_{H^1(B_1)}^2.
	\end{multline*}
Together with $\norm{z}_{H^1(B_1)}^2\leq2\norm{\zeta}_{H^1(B_1)}^2+2\norm{w}_{H^1(B_1)}^2$ this proves \eqref{eq:robin} with $\kappa=\kappa_0/4$.
\end{proof}

\bigskip
%%%%%%%%%%%%%%%%%%%%%%%%%%%%%%%%%%%%%%%%%%%%%%%%%%%%%%%%%%%%%%%%%%%%%%%%%%%%%%
%%%%%%%%%%%%%%%%%%%%%%%%%%%%%%%%%%%%%%%%%%%%%%%%%%%%%%%%%%%%%%%%%%%%%%%%%%%%%%
\section{Stability of the ball}\label{sec:proof}
%%%%%%%%%%%%%%%%%%%%%%%%%%%%%%%%%%%%%%%%%%%%%%%%%%%%%%%%%%%%%%%%%%%%%%%%%%%%%%
%%%%%%%%%%%%%%%%%%%%%%%%%%%%%%%%%%%%%%%%%%%%%%%%%%%%%%%%%%%%%%%%%%%%%%%%%%%%%%

In this section we prove Theorem~\ref{thm:local} by transporting the comparison identity of Lemma~\ref{lem:comparison} to the fixed ball through the diffeomorphism of Lemma~\ref{lem:extension} and bounding the resulting quadratic form from below by the Robin form \eqref{eq:Q}. The decomposition of the state is chosen so that the three defects in Lemma~\ref{lem:robin} are controlled by the volume and barycenter constraints and by the orthogonality in \eqref{eq:orthogonality}, and are therefore absorbed into the coercive term with no additive error in $q$. 

\bigskip

\begin{proof}[Proof of Theorem~\ref{thm:local}] Throughout the proof $q$ and $\vphi$ are as in \eqref{eq:constraints}, and we write $\Om\defeq\Om_\vphi$ and $T\defeq T_\vphi$. Since $\norm{\vphi}_{C^1}\leq\delta$, $\Om$ is a Lipschitz domain contained in $B_2$, and Lemma~\ref{lem:comparison} applies. Reducing $\delta$ if necessary, we also assume that $-\Psi_q'\geq c_q/2$ on $[1-\delta,1+\delta]$, which is possible by \eqref{eq:boundary-taylor} and the uniform $C^2$ bound of Lemma~\ref{lem:ball}. Let $v\in H^1_0(\Om)$ with $\norm{v}_2=1$. We choose the coefficient in Lemma~\ref{lem:comparison} by orthogonal projection,
	\begin{equation}\label{eq:decomposition}
		\alpha \defeq \frac{\int_\Om v\Psi_q\dd x}{\int_\Om\Psi_q^2\dd x}, \qquad r\defeq v-\alpha\Psi_q, \qquad z\defeq r\circ T\in H^1(B_1),
	\end{equation}
so that
	\begin{equation}\label{eq:orthogonality}
		\int_\Om\Psi_qr\dd x=0, \qquad \alpha^2\int_\Om\Psi_q^2\dd x+\norm{r}_{L^2(\Om)}^2=1, \qquad z\Big|_{\pt B_1}=-\alpha\Psi_q(1+\vphi).
	\end{equation}
The last identity holds because the trace of $r$ on $\pt\Om$ is $-\alpha\Psi_q$,  $T(\theta)=(1+\vphi(\theta))\theta$, and $\Psi_q$ is radial. Since $T$ is bi-Lipschitz with constants close to one, the norms of $r$ on $\Om$ and of $z$ on $B_1$ are uniformly equivalent, in $L^2$ as well as in $H^1$.

\smallskip
\noindent\emph{Step 1: the constraint errors.} Using the exact volume and barycenter formulas
	\begin{equation}\label{eq:moments}
		|\Om_\vphi| = \frac13\int_{\pt B_1}(1+\vphi)^3\dd\Htwo(\theta), \qquad \int_{\Om_\vphi}x\dd x = \frac14\int_{\pt B_1}(1+\vphi)^4\theta\dd\Htwo(\theta)
	\end{equation}
and the constraints in \eqref{eq:constraints} we obtain
	\begin{equation}\label{eq:phi-moments}
		\begin{gathered}
		\int_{\pt B_1}\vphi\dd\Htwo(\theta) = -\int_{\pt B_1}\vphi^2\dd\Htwo(\theta) - \frac13\int_{\pt B_1}\vphi^3\dd\Htwo(\theta),\\
		\int_{\pt B_1}\vphi\,\theta\dd\Htwo(\theta) = -\frac32\int_{\pt B_1}\vphi^2\theta\dd\Htwo(\theta) - \int_{\pt B_1}\vphi^3\theta\dd\Htwo(\theta) - \frac14\int_{\pt B_1}\vphi^4\theta\dd\Htwo(\theta).
		\end{gathered}
	\end{equation}
Hence, noting that $\norm{\vphi}_\infty\leq\delta\leq1$, we get
	\begin{equation}\label{eq:low-modes}
		\Big|\int_{\pt B_1}\vphi\dd\Htwo(\theta)\Big| + \Big|\int_{\pt B_1}\vphi\,\theta\dd\Htwo(\theta)\Big| \leq C\norm{\vphi}_{L^2}^2.
	\end{equation}
Additionally, since $\int_{B_1}\Psi_q^2\dd x=1$ and, by \eqref{eq:boundary-taylor}, $|\Psi_q(1+s)|\leq C|s|$ for $|s|\leq1$, we have that
	\begin{multline}\label{eq:mass-error}
		\Big|\int_\Om\Psi_q^2\dd x-1\Big| = \Big|\int_{\pt B_1}\int_1^{1+\vphi(\theta)}\Psi_q(\rho)^2\rho^2\dd\rho\dd\Htwo(\theta)\Big| \\ \leq C\int_{\pt B_1}|\vphi|^3\dd\Htwo(\theta) \leq C\delta\norm{\vphi}_{L^2}^2.
	\end{multline}
In particular, the denominator in \eqref{eq:decomposition} lies between $1/2$ and $2$, and $|\alpha|\leq\sqrt2$.

We now show that the boundary trace of $z$ controls the graph. On $[-\delta,\delta]$ the function $s\mapsto-\Psi_q(1+s)$ vanishes at zero and has derivative at least $c_q/2$, so it is a bi-Lipschitz bijection onto its image, and its inverse extends to a Lipschitz function $G$ on $\R$ with $G(0)=0$ and Lipschitz constant at most $2/c_q$. If $\alpha\neq0$, then $\vphi=G(\alpha^{-1}z\big|_{\pt B_1})$ by \eqref{eq:orthogonality}. Since $G(0)=0$, by the Gagliardo form of the $H^{1/2}(\partial B_1)$ norm, $\norm{G\circ f}_{H^{1/2}}\leq(2/c_q)\norm{f}_{H^{1/2}}$ for every $f\in H^{1/2}(\partial B_1)$, and the trace theorem on $B_1$ then yields
	\begin{equation}\label{eq:trace-control}
		|\alpha|\norm{\vphi}_{H^{1/2}} \leq C\norm{z}_{H^1(B_1)},
	\end{equation}
which is trivial if $\alpha=0$. By \eqref{eq:boundary-taylor}, we have $z\big|_{\pt B_1}=\alpha c_q\vphi-\alpha c_q\vphi^2+O(|\alpha||\vphi|^3)$. The linear terms are controlled by \eqref{eq:low-modes} and $\int_{\pt B_1}|\vphi|^3\dd\Htwo(\theta)\leq\norm{\vphi}_\infty\norm{\vphi}_{L^2}^2$; hence, with \eqref{eq:trace-control}, we get that
	\begin{equation}\label{eq:boundary-moments}
		\Big|\int_{\pt B_1}z\dd\Htwo(\theta)\Big| + \Big|\int_{\pt B_1}z\,\theta\dd\Htwo(\theta)\Big| \leq C|\alpha|\norm{\vphi}_{L^2}^2 \leq C\delta\,|\alpha|\norm{\vphi}_{H^{1/2}} \leq C\delta\norm{z}_{H^1(B_1)}.
	\end{equation}
For the radial moment, the orthogonality in \eqref{eq:orthogonality} and changing variable through $T$ give exactly
	\[
		\int_{B_1}\psi z\dd x = \int_{B_1}\big(\psi-(\Psi_q\circ T)\det DT\big)z\dd x.
	\]
The coefficient is bounded in $L^\infty(B_1)$ by
	\[
		\norm{\psi-u_q}_\infty + \norm{\Psi_q\circ T-\Psi_q}_\infty + \norm{\Psi_q}_\infty\norm{\det DT-1}_\infty \leq C(q+\delta),
	\]
by \eqref{eq:ball-estimates}, the Lipschitz bound on $\Psi_q$, and \eqref{eq:map-bounds}. Therefore
	\begin{equation}\label{eq:radial-moment}
		\Big|\int_{B_1}\psi z\dd x\Big| \leq C(q+\delta)\norm{z}_{L^2(B_1)}.
	\end{equation}
Thus the three defects in \eqref{eq:robin} are small multiples of $\norm{z}_{H^1(B_1)}$, with no additive error depending only on $q$. This is the point which distinguishes the present decomposition from the one in Remark~\ref{rem:dirichlet}.

\smallskip
\noindent\emph{Step 2: the comparison form.} Since $\Psi_q$ is radial, $\pt_\nu\Psi_q=\Psi_q'(|x|)\,\theta\cdot\nu$ on $\pt\Om$, and for the radial graph $x=(1+\vphi(\theta))\theta$ one has $\theta\cdot\nu\dd\Htwo(x)=(1+\vphi)^2\dd\Htwo(\theta)$. Hence,
	\begin{equation}\label{eq:boundary-integral}
		\int_{\pt\Om}\Psi_q\,\pt_\nu\Psi_q\dd\Htwo(x) = \int_{\pt B_1}\Psi_q(1+\vphi)\Psi_q'(1+\vphi)(1+\vphi)^2\dd\Htwo(\theta).
	\end{equation}
Using \eqref{eq:boundary-taylor} and $\Psi_q'(1+s)=-c_q+2c_qs+O(s^2)$, we get
	\[
		\Psi_q(1+s)\Psi_q'(1+s)(1+s)^2 = c_q^2s - c_q^2s^2 + O(|s|^3).
	\]
Inserting this in \eqref{eq:boundary-integral}, eliminating $\int_{\pt B_1}\vphi\dd\Htwo(\theta)$ through the volume identity in \eqref{eq:phi-moments}, and then expressing $\int_{\pt B_1}\vphi^2\dd\Htwo(\theta)$ through the trace in \eqref{eq:orthogonality}, we obtain
	\begin{equation}\label{eq:boundary-comparison}
		\begin{aligned}
		-\alpha^2\int_{\pt\Om}\Psi_q\,\pt_\nu\Psi_q\dd\Htwo(x) &= 2\alpha^2c_q^2\int_{\pt B_1}\vphi^2\dd\Htwo(\theta) + O\big(\delta\alpha^2\norm{\vphi}_{L^2}^2\big)\\
		&= 2\int_{\pt B_1}z^2\dd\Htwo(\theta) + O\big(\delta\norm{z}_{H^1(B_1)}^2\big),
		\end{aligned}
	\end{equation}
where in the last step we used $z^2\big|_{\pt B_1}=\alpha^2c_q^2\vphi^2+O(\alpha^2|\vphi|^3)$ and \eqref{eq:trace-control}. Now in the bulk term of \eqref{eq:comparison} we change variables through $T$. The coefficients become $(\det DT)\,DT^{-1}DT^{-T}$ and $(qV_q\circ T-\mu_q)\det DT$, which differ in $L^\infty(B_1)$ from the identity matrix and from $-\pi^2$, respectively, by at most $C(q+\delta)$, thanks to \eqref{eq:ball-estimates} and \eqref{eq:map-bounds}. Therefore 
	\begin{equation}\label{eq:bulk-comparison}
		\Big|\int_\Om\big(|\nabla r|^2+(qV_q-\mu_q)r^2\big)\dd x - \int_{B_1}\big(|\nabla z|^2-\pi^2z^2\big)\dd x\Big| \leq C(q+\delta)\norm{z}_{H^1(B_1)}^2.
	\end{equation}
Combining \eqref{eq:comparison}, \eqref{eq:boundary-comparison}, and \eqref{eq:bulk-comparison} with Lemma~\ref{lem:robin}, and absorbing the defects through \eqref{eq:boundary-moments} and \eqref{eq:radial-moment}, we arrive at
	\begin{equation}\label{eq:coercive-comparison}
		\E(v,\Om)-\en(B_1) \geq \kappa\norm{z}_{H^1(B_1)}^2 + q\nl(v^2-u_q^2,v^2-u_q^2)
	\end{equation}
provided $q_0$ and $\delta$ are small enough, with $\kappa$ half the constant of Lemma~\ref{lem:robin}. Note that no smallness of $\E(v,\Om)-\en(B_1)$ has been assumed in deriving \eqref{eq:coercive-comparison}.

\smallskip
\noindent\emph{Step 3: recovering the shape and the state.} By \eqref{eq:orthogonality} and \eqref{eq:mass-error}, $1\leq2\alpha^2+\norm{r}_{L^2(\Om)}^2$. Multiplying both sides by $\norm{\vphi}_{H^{1/2}}^2$, and using \eqref{eq:trace-control} along with the estimate $\norm{\vphi}_{H^{1/2}}^2\leq C\delta^2$, we obtain
	\begin{equation}\label{eq:shape-control}
		\norm{\vphi}_{H^{1/2}}^2 \leq 2\alpha^2\norm{\vphi}_{H^{1/2}}^2 + C\delta^2\norm{r}_{L^2(\Om)}^2 \leq C\norm{z}_{H^1(B_1)}^2.
	\end{equation}
This estimate also covers the case $\alpha=0$. Next, since $\alpha^2\int_\Om\Psi_q^2\dd x=1-\norm{r}_2^2$ and $1-\sqrt{1-s}\leq s$ for $0\leq s\leq1$, we have $0\leq1-|\alpha|\big(\int_\Om\Psi_q^2\dd x\big)^{1/2}\leq\norm{r}_{L^2(\Om)}^2$. Hence,
	\[
		\big|1-|\alpha|\big| \leq \norm{r}_{L^2(\Om)}^2 + |\alpha|\Big|\Big(\int_\Om\Psi_q^2\dd x\Big)^{1/2}-1\Big| \leq \norm{r}_{L^2(\Om)}^2 + 2\Big|\int_\Om\Psi_q^2\dd x-1\Big|.
	\]
Thus by \eqref{eq:mass-error} and \eqref{eq:shape-control} we get
	\begin{equation}\label{eq:alpha-control}
		(1-|\alpha|)^2 \leq C\norm{z}_{H^1(B_1)}^2.
	\end{equation}
Moreover, since $\Psi_q=u_q$ on $B_1$ and $\Psi_q$ is bounded in $C^2(\ol{B}_2)$, the mean value theorem applied to $\Psi_q$ and to $\nabla\Psi_q$, together with \eqref{eq:map-bounds}, yields
	\begin{equation}\label{eq:transport-reference}
		\norm{\Psi_q\circ T-u_q}_{H^1(B_1)} \leq C\norm{T-\id}_{H^1(B_1)} \leq C\norm{\vphi}_{H^{1/2}}.
	\end{equation}
We choose $\sigma\in\{-1,1\}$ with $\sigma\alpha=|\alpha|$, and $\sigma=1$ if $\alpha=0$. Since
	\[
		v\circ T-\sigma u_q = z + (\alpha-\sigma)\,\Psi_q\circ T + \sigma\big(\Psi_q\circ T-u_q\big),
	\]
the estimates \eqref{eq:shape-control}--\eqref{eq:transport-reference} bound $\norm{v\circ T-\sigma u_q}_{H^1(B_1)}^2$ by $C\norm{z}_{H^1(B_1)}^2$. Together with \eqref{eq:coercive-comparison} and \eqref{eq:shape-control}, this proves \eqref{eq:joint}.

Suppose now that $v\geq0$. If $\alpha\geq0$, then $\sigma=1$ by the choice above. Let $\alpha<0$. Since $v\geq0$ and $\Psi_q\geq-(\Psi_q)_-$, the definition of $\alpha$, the Cauchy--Schwarz inequality with $\norm{v}_2=1$, and the lower bound on the denominator in \eqref{eq:decomposition} imply
	\[
		|\alpha| = -\frac{\int_\Om v\Psi_q\dd x}{\int_\Om\Psi_q^2\dd x} \leq 2\norm{(\Psi_q)_-}_{L^2(\Om)}.
	\]
By Lemma~\ref{lem:ball}(i), $\Psi_q=u_q>0$ in $B_1$, so the negative part of $\Psi_q$ on $\Om$ is supported in $\Om\setminus\ol{B}_1$, where $|\Psi_q|\leq C|\vphi|$ by \eqref{eq:boundary-taylor}. Hence,
	\[
		\norm{(\Psi_q)_-}_{L^2(\Om)}^2 \leq C\int_{\pt B_1}|\vphi|^3\dd\Htwo \leq C\delta^3,
	\]
and $|\alpha|\leq C\delta^{3/2}$. For $\delta$ sufficiently small this estimate, combined with \eqref{eq:orthogonality}, yields $\norm{r}_{L^2(\Om)}^2=1-\alpha^2\int_\Om\Psi_q^2\dd x\geq1/2$; hence, $\norm{z}_{L^2(B_1)}\geq c>0$. Since $v\circ T-u_q=z+\alpha\Psi_q\circ T-u_q$ and $|\alpha|\leq\sqrt2$, we then have $\norm{v\circ T-u_q}_{H^1(B_1)}\leq\norm{z}_{H^1(B_1)}+C\leq C'\norm{z}_{H^1(B_1)}$. Thus $\sigma=1$ is admissible for nonnegative states.

\smallskip
\noindent\emph{Step 4: the upper bound.} Consider the trial state 
	\[
		v\defeq \frac{u_q\circ T^{-1}}{\norm{u_q\circ T^{-1}}_{L^2(\Om)}},
	\] 
where $u_q$ is extended by zero outside $B_1$. Since $\norm{u_q\circ T^{-1}}_{L^2(\Om)}^2=\int_{B_1}u_q^2\det DT\dd x$, $u_q$ is bounded, and $|\det DT-1|\leq C|DT-\id|$ pointwise, the normalization factor differs from one by at most $C\norm{DT-\id}_{L^2(B_1)}\leq C\norm{\vphi}_{H^{1/2}}$, thanks to \eqref{eq:map-bounds}. Moreover, the zero extension of $u_q$ is Lipschitz on $\R^3$, uniformly in $q$ as it vanishes on $\pt B_1$, its gradient is bounded in $B_1$ by \eqref{eq:ball-estimates}, and $B_1$ is convex. Writing $T-\id=\xi$, so that $T^{-1}(x)-x=-\xi(T^{-1}(x))$, and changing variables through $T$, we get
	\begin{equation}\label{eq:lipschitz-transport}
		\norm{u_q\circ T^{-1}-u_q}_{L^2(\R^3)} \leq C\norm{\xi\circ T^{-1}}_{L^2(\R^3)} \leq C\norm{\xi}_{L^2(\R^3)} \leq C\norm{\vphi}_{H^{1/2}}.
	\end{equation}
Together with \eqref{eq:transport-reference} and the bound on the normalization factor, this gives
	\begin{equation}\label{eq:trial-bounds}
		\norm{(v-\Psi_q)\circ T}_{H^1(B_1)} + \norm{v-u_q}_{L^2(\R^3)} \leq C\norm{\vphi}_{H^{1/2}}.
	\end{equation}
	
We now apply Lemma~\ref{lem:comparison} to the trial state $v$ with $\alpha=1$, so that $r=v-\Psi_q$ in $\Om$, and estimate the three terms on the right-hand side of \eqref{eq:comparison}. The first term is bounded by $C\norm{(v-\Psi_q)\circ T}_{H^1(B_1)}^2$, because $qV_q-\mu_q$ is bounded and $T$ is bi-Lipschitz. The boundary term equals $2c_q^2\int_{\pt B_1}\vphi^2\dd\Htwo+O(\delta\norm{\vphi}_{L^2}^2)$, by the computation leading to \eqref{eq:boundary-comparison} with $\alpha=1$; hence it is of order $O(\norm{\vphi}_{L^2}^2)$. For the Coulomb term, the Hardy--Littlewood--Sobolev inequality and H\"older's inequality give
	\[
		\nl(v^2-u_q^2,v^2-u_q^2) \leq C\norm{(v-u_q)(v+u_q)}_{6/5}^2 \leq C\norm{v-u_q}_2^2\norm{v+u_q}_3^2 \leq C\norm{\vphi}_{H^{1/2}}^2,
	\]
since $v$ and $u_q$ have uniformly bounded $H^1$ norms and \eqref{eq:trial-bounds} holds. Adding the three bounds and using \eqref{eq:trial-bounds} once more for the first term, we obtain 
	\[
		\E(v,\Om)-\en(B_1)\leq C\norm{\vphi}_{H^{1/2}}^2,
	\]
and since $v$ is admissible in the definition of $\en(\Om)$, this proves the upper bound in \eqref{eq:two-sided}. The lower bound follows by taking the infimum over $v$ in \eqref{eq:joint}, and therefore the proof of Theorem~\ref{thm:local} is complete.
\end{proof}	

\medskip

We now justify the variant of \eqref{eq:joint} stated in Remark~\ref{rem:L2}, in which $\norm{v-\sigma u_q}_2$ replaces the transported distance, with $v$ and $u_q$ extended by zero. For an arbitrary normalized $v\in H^1_0(\Om)$ and either sign $\sigma\in\{-1,1\}$,
	\[
		v-\sigma u_q = \big(v-\sigma\,u_q\circ T^{-1}\big) + \sigma\big(u_q\circ T^{-1}-u_q\big) \qquad\text{on }\R^3.
	\]
The first term is supported in $\Om$, and changing variables through $T$ gives
	\[
		\norm{v-\sigma u_q\circ T^{-1}}_{L^2(\Om)}\leq C\norm{v\circ T-\sigma u_q}_{L^2(B_1)}.
	\] 
The second term is bounded by \eqref{eq:lipschitz-transport}. Hence 	\[
	\norm{v-\sigma u_q}_2^2\leq C\big(\norm{v\circ T-\sigma u_q}_{H^1(B_1)}^2+\norm{\vphi}_{H^{1/2}}^2\big)
	\]
for both signs, and minimizing over $\sigma$ and using \eqref{eq:joint} proves the claim.

\begin{remark}[Sharpness of the order]\label{rem:sharp}
The quadratic power in \eqref{eq:two-sided} cannot be improved: for a nonzero spherical harmonic $h$ of even degree $\ell\geq2$, the graph $th+a(t)$, with the constant $a(t)=O(t^2)$ chosen to restore the volume, has barycenter zero by central symmetry, and \eqref{eq:two-sided} shows that its deficit is of order exactly $t^2$. The Sobolev exponent $1/2$ cannot be raised either. Suppose that $\en(\Om_\vphi)-\en(B_1)\geq\kappa'\norm{\vphi}_{H^s}^2$ held under \eqref{eq:constraints} for some $s>1/2$. Then the upper bound in \eqref{eq:two-sided}, applied to the graphs just described, would give $\norm{\vphi}_{H^s}^2\leq C\norm{\vphi}_{H^{1/2}}^2$ for every even $\ell$ and every small $t$. This is impossible, since on a spherical harmonic of degree $\ell$ the ratio $\norm{h}_{H^s}^2/\norm{h}_{H^{1/2}}^2$ is of order $\ell^{2s-1}$ (up to $O(t)$ corrections), which is unbounded in $\ell$.
\end{remark}

%%%%%%%%%%%%%%%%%%%%%%%%%%%%%%%%%%%%%%%%%%%%%%%%%%%%%%%%%%%%%%%%%%%%%%%%%%%%%%
%%%%%%%%%%%%%%%%%%%%%%%%%%%%%%%%%%%%%%%%%%%%%%%%%%%%%%%%%%%%%%%%%%%%%%%%%%%%%%
\section{Minimality of the ball}\label{sec:global}
%%%%%%%%%%%%%%%%%%%%%%%%%%%%%%%%%%%%%%%%%%%%%%%%%%%%%%%%%%%%%%%%%%%%%%%%%%%%%%
%%%%%%%%%%%%%%%%%%%%%%%%%%%%%%%%%%%%%%%%%%%%%%%%%%%%%%%%%%%%%%%%%%%%%%%%%%%%%%

In this section we prove Theorem~\ref{thm:global} by combining the near-sphericity of minimizers established in \cite[Theorem 1.2]{MMR} with the lower bound in \eqref{eq:two-sided}. The estimate \eqref{eq:two-sided} identifies a minimizer with a translate of $B_1$ only up to a null set, whereas admissible sets are identified up to capacity zero, and the passage from the former to the latter is the second part of the proof.

\bigskip

\begin{proof}[Proof of Theorem~\ref{thm:global}]
The case $q=0$ is the Faber--Krahn inequality, with its equality case up to capacity zero. Let $q>0$. By \cite[Theorem 1.2]{MMR} (cf. \eqref{eq:MMR} above), for every $\e>0$ there is $q^*(\e)>0$ such that for $0<q\leq q^*(\e)$ a minimizer exists and every minimizer is, after a translation and up to a null set, a radial graph $\Om_\vphi$ with $\norm{\vphi}_{C^2(\pt B_1)}\leq\e$. Let $\Om$ be a minimizer and let $u$ be a nonnegative optimal state on $\Om$. The regularity statement of \cite{MMR} concerns the positivity set $\{u>0\}$ (cf.\ \cite[Lemmas 3.6 and 3.7]{MMR}), which we call the regular representative of $\Om$. Note that $\{u>0\}$ coincides with $\Om$ up to a null set for every minimizer. Indeed, $\{u>0\}\subset\Om$ has the same energy as $\Om$, because $u$ is admissible on $\{u>0\}$ and the energy is monotone with respect to inclusion (cf.\ \cite[Lemma 2.7]{MMR}). If $|\{u>0\}|<|B_1|$, then dilating $\{u>0\}$ to volume $|B_1|$ would strictly decrease the energy, since for $t>1$ the rescaled state $u_t(x)\defeq t^{-3/2}u(x/t)$ on $t\{u>0\}$ satisfies
	\[
		\E\big(u_t,t\{u>0\}\big) = t^{-2}\int_{\{u>0\}}|\nabla u|^2\dd x + t^{-1}q\nl(u^2,u^2) < \E\big(u,\{u>0\}\big),
	\]
contradicting minimality. Hence $|\{u>0\}|=|B_1|$, and $\{u>0\}$ coincides with $\Om$ up to a null set.

We translate once more so that the barycenter is at the origin. This preserves the smallness of the radial graph. By \eqref{eq:moments}, the displacement of the barycenter is bounded by $C\norm{\vphi}_{C^0}$. A sufficiently small translation of a $C^2$-small radial graph over the unit sphere is again a radial graph over the unit sphere, with $C^2$ norm bounded by $C\norm{\vphi}_{C^2}$, by the implicit function theorem.

We now choose $\e$ sufficiently small so that the centered graph satisfies $\norm{\vphi}_X\leq\delta$, which is possible since $C^2(\pt B_1)\iarrow X$, and we replace $q_0$ by $\min\{q^*(\e),q_0\}$. Then every minimizer is, after translations, a set $\Om_\vphi$ satisfying \eqref{eq:constraints}. Since $B_1$ is admissible, $\en(\Om_\vphi)\leq\en(B_1)$, and the lower bound in \eqref{eq:two-sided} forces $\vphi=0$. Hence the regular representative of every minimizer is a translate of $B_1$, and since a minimizer exists, $e_q(|B_1|)=\en(B_1)$. The local comparison requires neither criticality nor an Euler--Lagrange equation on the varying domain.

It remains to justify the identification of admissible sets up to capacity zero in the equality case, which does not follow from equality almost everywhere alone. Let $\Om$ be an admissible quasi-open set of volume $|B_1|$ with $\en(\Om)=\en(B_1)$. By the above, after a translation, $\Om=B_1$ almost everywhere. Every $w\in H^1_0(\Om)$ then vanishes almost everywhere outside $B_1$, and the zero-extension characterization of $H^1_0$ on the Lipschitz domain $B_1$ gives $w\in H^1_0(B_1)$. Thus $H^1_0(\Om)$ is a closed subspace of $H^1_0(B_1)$. Let $u_n\in H^1_0(\Om)$ be a nonnegative normalized minimizing sequence for $\en(\Om)$. It is bounded in $H^1(B_1)$, so a subsequence converges weakly in $H^1(B_1)$ and strongly in $L^2(B_1)$ to a nonnegative normalized function $u$, which belongs to $H^1_0(\Om)$ since closed subspaces are weakly closed. The Dirichlet integral is weakly lower semicontinuous and the Coulomb term is continuous along the sequence by \eqref{eq:D-lip}, so that $\E(u,B_1)\leq\en(\Om)=\en(B_1)$. Hence $u$ is a minimizing state on the ball, and $u=u_q$ by Lemma~\ref{lem:ball}. In particular $u_q\in H^1_0(\Om)$. 

By definition of $H^1_0(\Om)$, the quasi-continuous representative of $u_q$ vanishes quasi-everywhere outside $\Om$, and since $u_q$ is continuous and positive on $B_1$, we obtain $\capa(B_1\setminus\Om)=0$. Conversely, we write $\Om$, up to capacity zero, as the positivity set of the quasi-continuous representative of a nonnegative $w\in H^1(\R^3)$, which is possible for quasi-open sets of finite measure by \cite[Section 2.3]{MMR} and exhaustion. Since $\Om=B_1$ almost everywhere, $w$ vanishes almost everywhere outside $B_1$, hence $w\in H^1_0(B_1)$ and its quasi-continuous representative vanishes quasi-everywhere outside $B_1$. This gives $\capa(\Om\setminus B_1)=0$. The two inclusions show that $\Om=B_1$ up to capacity zero. Finally, if $\Om$ coincides with a translate of $B_1$ up to capacity zero, then the two sets have the same Sobolev space and the same energy, which proves the converse.
\end{proof}

\medskip

\begin{remark}[Asymmetry and other volumes]\label{rem:scaling}
For an admissible set $\Om$ let
	\[
		\mathcal{A}(\Om)\defeq\inf_{z\in\R^3}\frac{|\Om\triangle(B_1+z)|}{|B_1|}
	\]
denote its Fraenkel asymmetry. Since 
	\[
		|\Om_\vphi\triangle B_1|=\frac13\int_{\pt B_1}|(1+\vphi)^3-1|\dd\Htwo\leq C\norm{\vphi}_{L^1}\leq C\norm{\vphi}_{H^{1/2}},
	\]
after adjusting $\kappa$, Theorem~\ref{thm:local} yields the local asymmetry estimate
	\[
		\en(\Om_\vphi)-\en(B_1)\geq\kappa\,\mathcal{A}(\Om_\vphi)^2
	\]
under the constraints \eqref{eq:constraints}. 

The results extend to other volumes by scaling. At prescribed volume $m$ we set the radius $R\defeq(m/|B_1|)^{1/3}$. For $u_R(x)\defeq R^{-3/2}u(x/R)$ the $L^2$ normalization is preserved and  we have $\E(u_R,R\Om)=R^{-2}\mathcal{E}_{qR}(u,\Om)$. Hence, $\en(R\Om)=R^{-2}\mathcal{E}_{qR}(\Om)$ and $e_q(m)=R^{-2}e_{qR}(|B_1|)$. Consequently, Theorem~\ref{thm:local} and Theorem~\ref{thm:global} hold for balls of radius $R$ under the condition $qR\leq q_0$, with the energy deficit multiplied by $R^{-2}$. The proof gives a positive threshold $q_0$ but neither an explicit nor an optimal value of it. In particular, it does not identify the transition out of the ball regime.
\end{remark}

%%%%%%%%%%%%%%%%%%%%%%%%%%%%%%%%%%%%%%%%%%%%%%%%%%%%%%%%%%%%%%%%%%%%%%%%%%%%%%
%%%%%%%%%%%%%%%%%%%%%%%%%%%%%%%%%%%%%%%%%%%%%%%%%%%%%%%%%%%%%%%%%%%%%%%%%%%%%%
\section{The Hartree--Ohta--Kawasaki model}\label{sec:quadratic}
%%%%%%%%%%%%%%%%%%%%%%%%%%%%%%%%%%%%%%%%%%%%%%%%%%%%%%%%%%%%%%%%%%%%%%%%%%%%%%
%%%%%%%%%%%%%%%%%%%%%%%%%%%%%%%%%%%%%%%%%%%%%%%%%%%%%%%%%%%%%%%%%%%%%%%%%%%%%%

In this section we show that the direct comparison applies also to the model of \cite{MMR26H}, whose nonlocal term is quadratic rather than quartic in the state, and we obtain the analogues of Theorem~\ref{thm:local} and Theorem~\ref{thm:global}. In \cite{MMR26H}, Mazzoleni, Moraschi, and Ruffini study the energy
	\begin{equation}\label{eq:quadratic-min}
		\widetilde{\mathcal{E}}_q(\Om) \defeq \inf\Big\{\widetilde{\mathcal{E}}_q(v,\Om) \colon v\in H^1_0(\Om),\ \norm{v}_2=1\Big\},
	\end{equation}
where
	\begin{equation}\label{eq:quadratic-energy}
		\widetilde{\mathcal{E}}_q(v,\Om) \defeq \int_\Om|\nabla v|^2\dd x + q\nl(v,v).
	\end{equation}
The ground states of this energy are not a priori of constant sign, and the existence and near-sphericity theorem \cite[Theorem 1.1]{MMR26H} relies on showing that they are, for small $q$. In \cite{MMR26H} the authors conjecture that balls are the unique minimizers for small $q$ and we prove this conjecture below. This conjecture has also been proved independently in \cite{MMR26R} by a second shape derivative argument.

Throughout this section we will denote by $u_q$ the positive normalized minimizer of $\widetilde{\mathcal{E}}_q(\cdot,B_1)$, and set
	\[
		\tld{V}_q\defeq Ku_q, \qquad \text{ and } \qquad \tld{\mu}_q\defeq\widetilde{\mathcal{E}}_q(B_1). 
	\]

\bigskip

\begin{corollary}\label{cor:quadratic}
There exist $q_0,\delta,\kappa,C>0$, with $q_0$ and $\delta$ possibly smaller than in Theorem~\ref{thm:local}, such that under the constraints \eqref{eq:constraints}, every $v\in H^1_0(\Om_\vphi)$ with $\norm{v}_2=1$ satisfies
	\begin{equation}\label{eq:quad-joint}
		\widetilde{\mathcal{E}}_q(v,\Om_\vphi)-\widetilde{\mathcal{E}}_q(B_1) \geq \kappa\Big(\norm{\vphi}_{H^{1/2}}^2 + \min_{\sigma\in\{-1,1\}}\norm{v\circ T_\vphi-\sigma u_q}_{H^1(B_1)}^2\Big),
	\end{equation}
and
	\begin{equation}\label{eq:quad-two-sided}
		\kappa\norm{\vphi}_{H^{1/2}}^2 \leq \widetilde{\mathcal{E}}_q(\Om_\vphi)-\widetilde{\mathcal{E}}_q(B_1) \leq C\norm{\vphi}_{H^{1/2}}^2.
	\end{equation}
Moreover, for $0\leq q\leq q_0$, balls are the unique minimizers of $\widetilde{\mathcal{E}}_q$ among admissible sets of volume $|B_1|$, up to translations and up to sets of capacity zero.
\end{corollary}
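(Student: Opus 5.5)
The plan is to redo the proof of Theorem~\ref{thm:local} with one change. The problem \eqref{eq:quadratic-min} is a \emph{linear} eigenvalue problem for the nonlocal operator $-\Delta+q\,\mathbf 1_\Om K$. The completion of the square in Lemma~\ref{lem:comparison} therefore has to be replaced by an expansion of the Coulomb form about $\alpha u_q$, not about $u_q^2$.

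\emph{Ball state.} First I would prove the analogue of Lemma~\ref{lem:ball}. For small $q$ the minimizer of $\widetilde{\mathcal E}_q(\cdot,B_1)$ is simple, positive and radial; positivity is taken from \cite{MMR26H}, and simplicity and radiality follow from a perturbation of the simple eigenvalue $\pi^2$. It satisfies
\[
-\Delta u_q+q\tld V_q=\tld\mu_q u_q \quad\text{in } B_1,
\]
with $\tld\mu_q=\widetilde{\mathcal E}_q(B_1)$, because the energy is a quadratic form and $2\nl(u_q,\cdot)=\int\tld V_q\,\cdot$. The bounds $\norm{u_q-\psi}_{C^2}+|\tld\mu_q-\pi^2|\le Cq$ and $\norm{\tld V_q}_{C^1}\le C$ follow exactly as in Lemma~\ref{lem:ball}(ii). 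For the continuation, I would let $\Psi_q$ be the radial solution on $(0,2]$ of the \emph{inhomogeneous} linear ODE
\[
-\Delta\Psi_q-\tld\mu_q\Psi_q=-q\tld V_q,
\]
with Cauchy data of $u_q$ at $\rho=1$. At $\rho=1$ this gives $\Psi_q''(1)=2c_q+q\tld V_q(1)$. Hence \eqref{eq:boundary-taylor} holds with $c_qs^2$ replaced by $(c_q+O(q))s^2$, and the Robin coefficient in the limit form is $2+O(q)$.

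\emph{Comparison identity.} For $v\in H^1_0(\Om)$ with $\norm v_2=1$ and $\alpha\in\R$, I would set $r=v-\alpha\Psi_q$ on $\Om$ and $w=v-\alpha u_q$ on $\R^3$, using zero extensions. Then
\[
\nl(v,v)=\alpha^2\nl(u_q,u_q)+\alpha\int \tld V_q w+\nl(w,w).
\]
I would insert this into $\widetilde{\mathcal E}_q(v,\Om)-\tld\mu_q\int_\Om v^2$ and integrate the $\alpha\Psi_q$ terms by parts using the inhomogeneous equation. The outcome I aim for is the exact identity
\[
\widetilde{\mathcal E}_q(v,\Om)-\tld\mu_q=\int_\Om\big(|\nabla r|^2-\tld\mu_q r^2\big)-\alpha^2\int_{\pt\Om}\Psi_q\pt_\nu\Psi_q+q\nl(w,w)+q\,\mathcal R,
\]
where $\mathcal R$ collects the mismatches between $\Psi_q$ and $u_q\mathbf 1_{B_1}$. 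These mismatches are integrals of $\tld V_q$ against $\Psi_q$, $r$ or $w$ over $\Om\triangle B_1$, plus the cancelling linear terms. I then drop $q\nl(w,w)\ge0$.

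\emph{Transport to the ball.} Next I would follow Steps 1--4 of the proof of Theorem~\ref{thm:local} verbatim, with the same decomposition \eqref{eq:decomposition}, the same constraint errors and the same transport through $T_\vphi$. Both changes of the potential term are harmless: the bulk coefficient is now $-\tld\mu_q=-\pi^2+O(q)$, and the Robin coefficient $2+O(q)$ is absorbed by Lemma~\ref{lem:robin} exactly as in \eqref{eq:bulk-comparison}. The radial-moment defect \eqref{eq:radial-moment} is unchanged.

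\emph{Main obstacle.} I expect the hardest step to be showing that $q\mathcal R$ is quadratic, namely
\[
|q\mathcal R|\le Cq\big(\norm{z}_{H^1}^2+\norm\vphi_{L^2}^2\big)\le Cq\norm z_{H^1}^2,
\]
the last inequality by \eqref{eq:shape-control}. This means checking that every term linear in $r$ or $\vphi$ cancels. My first attempt is as follows: the linear-in-$r$ terms cancel by the equation and the boundary trace $r=-\alpha\Psi_q$, and the remaining pieces live on $\Om\triangle B_1$. There $|\Psi_q|\le C|\vphi|$, so those pieces are $O(\int|\vphi|^2)$ or bounded via the trace by $\norm{r}_{L^2(\Om\setminus B_1)}\norm\vphi_{L^2}$. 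Any linear term such as $\int_{\pt B_1}\vphi$ would be eliminated through \eqref{eq:low-modes}.

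With that bound in hand, \eqref{eq:quad-joint} follows as in Step 3, and the upper bound in \eqref{eq:quad-two-sided} uses the trial state of Step 4 with $\nl(w,w)\le C\norm w_2^2$. Rigidity then follows as in Section~\ref{sec:global}, using the near-sphericity from \cite[Theorem 1.1]{MMR26H} and the same capacity argument, the linear problem having a unique positive ground state on $B_1$.
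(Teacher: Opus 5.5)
Your plan is the paper's own argument. You expand $\nl(v,v)$ about $\alpha u_q$; your form $\alpha^2\nl(u_q,u_q)+\alpha\int\tld V_q w+\nl(w,w)$ is correct. You continue $u_q$ by an inhomogeneous radial equation, keep the decomposition \eqref{eq:decomposition}, drop $q\nl(w,w)\geq0$, and rerun Steps 1--4 with bulk coefficient $-\tld\mu_q$ and Robin coefficient $2+O(q)$.

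There is, however, a factor-of-two slip that is not cosmetic. Since $\widetilde{\mathcal{E}}_q(v,B_1)=\int|\nabla v|^2\dd x+\frac q2\int vKv\dd x$, the ground state solves $-\Delta u_q+\frac q2\tld V_q=\tld\mu_q u_q$. Only with this factor does $\tld\mu_q=\widetilde{\mathcal{E}}_q(B_1)$ hold. Correspondingly, the continuation solves $(-\Delta-\tld\mu_q)\Psi_q=-\frac q2\tld V_q$, and $\Psi_q''(1)=2c_q+\frac q2\tld V_q(1)$.

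The exact cancellation you anticipate depends on this factor. Integrating the mixed term by parts gives $-2\alpha^2\int_{\pt\Om}\Psi_q\pt_\nu\Psi_q-q\alpha\int_\Om\tld V_q r\dd x$. The last piece cancels the $+q\alpha\int_\Om\tld V_q r\dd x$ contained in $q\alpha\int\tld V_q w$. With your coefficient $q\tld V_q$, a term $-q\alpha\int_\Om\tld V_q r\dd x$ would survive; it is linear in $r$ and cannot be absorbed into $\kappa\norm{z}_{H^1(B_1)}^2$.

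With the correct factor there is no ``main obstacle''. The remainder is exactly $\mathcal R=\frac{\alpha^2}{2}\big(\int_\Om\tld V_q\Psi_q\dd x-\int_{B_1}\tld V_q u_q\dd x\big)$, as in \eqref{eq:quad-comparison}, with no terms in $r$, $w$ or $\int_{\pt B_1}\vphi$. Since $\Psi_q=u_q$ on $B_1$, it is an integral over $\Om\triangle B_1$. It is bounded by $C\norm{\vphi}_{L^2}^2$ because $|\Psi_q(\rho)|\leq C|\rho-1|$, and then by $C\norm{z}_{H^1(B_1)}^2$ via \eqref{eq:trace-control}.

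The global step also does not go through verbatim ``as in Section~\ref{sec:global}''. That section shows $|\{u>0\}|=|B_1|$ by dilating, using that both terms of the quartic energy decrease under dilation. Here $\nl(u_t,u_t)=t^2\nl(u,u)$ increases, so dilation lowers the energy only when $qt^2\nl(u,u)<\int|\nabla u|^2\dd x$. You must check this separately for small $q$, using three facts: $\int|\nabla u|^2\dd x\geq\pi^2$ by Faber--Krahn, $\nl(u,u)\leq C$ by Hardy--Littlewood--Sobolev on the bounded support, and $t$ close to one.

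The remaining parts match the paper. The paper derives positivity of $u_q$ directly from $\norm{u_q-\psi}_{C^1}\leq Cq$ rather than citing \cite{MMR26H}, but either works. The capacity argument via simplicity of the first eigenvalue is the same.
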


\begin{proof}
For $v\in H^1_0(B_1)$, extended by zero, \eqref{eq:coulomb-identities} gives
	\[
		\widetilde{\mathcal{E}}_q(v,B_1)=\int_{B_1}|\nabla v|^2\dd x+\frac q2\int_{B_1}v\,Kv\dd x,
	\]
i.e., $\widetilde{\mathcal{E}}_q(\cdot,B_1)$ is the quadratic form of the Dirichlet operator $-\Delta+\frac q2K$ on $B_1$, where $K$ acts on functions in $L^2(B_1)$ extended by zero. The operator $K$ is bounded on $L^2(B_1)$ and positive, since $\int_{B_1}f\,Kf\dd x=2\nl(f,f)\geq0$ by \eqref{eq:coulomb-identities}. Hence, the ground states of \eqref{eq:quadratic-energy} on $B_1$ are the first eigenfunctions of $-\Delta+\frac q2K$ with $\tld{\mu}_q$ its first eigenvalue, and
	\begin{equation}\label{eq:quad-state}
		-\Delta u_q + \frac q2\tld{V}_q = \tld{\mu}_qu_q \quad\text{in } B_1, \qquad u_q=0 \quad\text{on } \pt B_1.
	\end{equation}
	
Since $K$ is bounded and positive, the operator $-\Delta+\frac q2K$ is a bounded perturbation of the Dirichlet Laplacian on $B_1$ and lies between $-\Delta$ and $-\Delta+\frac q2\norm{K}$ in the sense of quadratic forms. By the min--max principle (see \cite[Theorem XIII.2]{RS}), its first eigenvalue lies between $\pi^2$ and $\pi^2+Cq$, and its second eigenvalue is at least the second Dirichlet eigenvalue of $B_1$. Hence, for small $q$, the first eigenvalue is simple.

We rewrite \eqref{eq:quad-state} as $(-\Delta-\pi^2)(u_q-\psi)=(\tld{\mu}_q-\pi^2)u_q-\frac q2\tld{V}_q$. Since $\norm{\tld{V}_q}_\infty\leq C\norm{u_q}_2$ by the Cauchy--Schwarz inequality and $\sup_x\int_{B_1}|x-y|^{-2}\dd y<\infty$, the argument of Lemma~\ref{lem:ball}(ii) applies with the obvious changes. After fixing the sign of $u_q$ so that $\int_{B_1}u_q\psi\dd x>0$, we get 
	\[
		\norm{u_q-\psi}_{C^2(\ol{B}_1)}\leq Cq, \qquad |\tld{\mu}_q-\pi^2|\leq Cq, \qquad \text{and} \qquad \norm{\tld{V}_q}_{C^1(\R^3)}\leq C
	\]
with bounds uniform in $q$. Since $\psi$ vanishes linearly on $\pt B_1$, there is $c_0>0$ with $\psi\geq c_0\dist(\cdot,\pt B_1)$ in $B_1$, whereas $u_q-\psi$ vanishes on $\pt B_1$ and has gradient bounded by $Cq$, so that $|u_q-\psi|\leq Cq\dist(\cdot,\pt B_1)$. Hence $u_q>0$ in $B_1$ for small $q$. Finally, $u_q\circ R$ is a positive normalized first eigenfunction for every rotation $R$, and the simplicity of the first eigenvalue gives $u_q\circ R=u_q$, so that $u_q$ is radial.

Next we continue the radial state across the sphere by solving the radial form of
	\begin{equation}\label{eq:quad-continuation}
		(-\Delta-\tld{\mu}_q)\Psi_q = -\frac q2\tld{V}_q \quad\text{in } B_2,
	\end{equation}
with $\Psi_q=u_q$ on $B_1$. As in Lemma~\ref{lem:ball}(iii), $\Psi_q$ is bounded in $C^2(\ol{B}_2)$ and in $C^3(\ol{B}_2\setminus B_{1/2})$, uniformly in $q$. Evaluating the radial equation at $\rho=1$ now gives
	\[
		\Psi_q''(1) = 2c_q + \frac q2\tld{V}_q(1), \qquad c_q = -\Psi_q'(1) = \sqrt{\pi/2}+O(q),
	\]
so that the coefficient of $s^2$ in the expansion of $\Psi_q(1+s)$ differs from $c_q$ by $O(q)$.

Let $\Om=\Om_\vphi$ be as in Section~\ref{sec:proof}, let $v\in H^1_0(\Om)$ be normalized, and choose $\alpha$, $r$, $z$ as in \eqref{eq:decomposition}. Expanding the Coulomb term about $\alpha u_q$, we get
	\[
		\nl(v,v) = \alpha\int_\Om \tld{V}_qv\dd x - \alpha^2\nl(u_q,u_q) + \nl(v-\alpha u_q,v-\alpha u_q).
	\]
Then using $\norm{v}_2=1$ and $\widetilde{\mathcal{E}}_q(B_1)=\tld{\mu}_q$, we obtain the exact identity
	\begin{equation}\label{eq:quad-comparison}
		\begin{aligned}
		\widetilde{\mathcal{E}}_q(v,\Om)-\widetilde{\mathcal{E}}_q(B_1)
		={}& \int_\Om\big(|\nabla r|^2-\tld{\mu}_qr^2\big)\dd x - \alpha^2\int_{\pt\Om}\Psi_q\,\pt_\nu\Psi_q\dd\Htwo(x)\\
		&+ \frac{q\alpha^2}{2}\Big(\int_\Om \tld{V}_q\Psi_q\dd x - \int_{B_1}\tld{V}_qu_q\dd x\Big) + q\nl(v-\alpha u_q,v-\alpha u_q).
		\end{aligned}
	\end{equation}
Indeed, inserting $v=\alpha\Psi_q+r$ into $\int_\Om(|\nabla v|^2-\tld{\mu}_qv^2)\dd x$ and integrating by parts with \eqref{eq:quad-continuation}, the pure and the mixed terms produce
	\[
		-\alpha^2\int_{\pt\Om}\Psi_q\,\pt_\nu\Psi_q\dd\Htwo(x)-\frac{q\alpha^2}2\int_\Om \tld{V}_q\Psi_q\dd x-q\alpha\int_\Om \tld{V}_qr\dd x,
	\]
while the Coulomb expansion gives
	\[
		q\alpha\int_\Om \tld{V}_qv\dd x = q\alpha^2\int_\Om \tld{V}_q\Psi_q\dd x+q\alpha\int_\Om \tld{V}_qr\dd x.
	\]
The two integrals of $\tld{V}_qr$ cancel, and the remaining $\frac{q\alpha^2}{2}\int_\Om \tld{V}_q\Psi_q\dd x-q\alpha^2\nl(u_q,u_q)$ equals the third term in \eqref{eq:quad-comparison}, because $2\nl(u_q,u_q)=\int_{B_1}\tld{V}_qu_q\dd x$.

We now estimate the terms in \eqref{eq:quad-comparison}. The moment estimates \eqref{eq:low-modes}--\eqref{eq:radial-moment} used only the $C^2$ bound on $\Psi_q$, its linear vanishing on $\pt B_1$, and the $C^1$ proximity of the ball state to $\psi$, so they remain valid here as well. In the boundary computation \eqref{eq:boundary-comparison}, the $O(q)$ change in the second-order coefficient of $\Psi_q$ produces an additional error, and the error term becomes $O\big((q+\delta)\norm{z}_{H^1(B_1)}^2\big)$. The first term in \eqref{eq:quad-comparison} is $\int_\Om(|\nabla r|^2-\tld{\mu}_qr^2)\dd x$, without the potential $qV_q$ of \eqref{eq:comparison}, and \eqref{eq:bulk-comparison} holds for it by the same change of variables, since $-\tld{\mu}_q\det DT$ differs from $-\pi^2$ by at most $C(q+\delta)$.

The shell term, on the other hand, satisfies
	\[
		\Big|\int_\Om \tld{V}_q\Psi_q\dd x-\int_{B_1}\tld{V}_qu_q\dd x\Big| \leq \norm{\tld{V}_q}_\infty\int_{\pt B_1}\Big|\int_1^{1+\vphi(\theta)}|\Psi_q(\rho)|\rho^2\dd\rho\Big|\dd\Htwo(\theta) \leq C\norm{\vphi}_{L^2}^2,
	\]
because $|\Psi_q(\rho)|\leq C|\rho-1|$. After multiplication by $q\alpha^2/2$, it is bounded by $Cq\norm{z}_{H^1(B_1)}^2$ thanks to \eqref{eq:trace-control}. The last term in \eqref{eq:quad-comparison} is nonnegative. Lemma~\ref{lem:robin} therefore gives the analogue of \eqref{eq:coercive-comparison} for the quadratic energy, and Step 3 of the proof of Theorem~\ref{thm:local} yields \eqref{eq:quad-joint}.

For the upper bound in \eqref{eq:quad-two-sided} we use the normalized transported trial state of Step 4 and \eqref{eq:quad-comparison} with $\alpha=1$. Its $L^2(\R^3)$ distance from $u_q$ is $O(\norm{\vphi}_{H^{1/2}})$ by \eqref{eq:trial-bounds}, and the Hardy--Littlewood--Sobolev inequality on the common bounded support gives 
	\[
		\nl(v-u_q,v-u_q)\leq C\norm{v-u_q}_{6/5}^2\leq C\norm{v-u_q}_2^2.
	\] 
All other terms have the bounds already established, and the lower bound in \eqref{eq:quad-two-sided} follows from \eqref{eq:quad-joint}.

The global assertion follows as in Section~\ref{sec:global}. By \cite[Theorem 1.1]{MMR26H}, for sufficiently small $q$, a minimizer exists and, after a translation, the positivity set $\{u>0\}$ of a nonnegative optimal state $u$ on every minimizer $\Om$ is a radial graph $\Om_\vphi$ with $\norm{\vphi}_{C^{2,\alpha}(\pt B_1)}\to0$ as $q\to0$. The set $\{u>0\}\subset\Om$ has the same energy as $\Om$, by the monotonicity of $\widetilde{\mathcal{E}}_q$ with respect to inclusion, which is immediate from \eqref{eq:quadratic-energy}. In the dilation argument, the Coulomb term scales as $\nl(u_t,u_t)=t^2\nl(u,u)$, so that for $t>1$
	\[
		\widetilde{\mathcal{E}}_q\big(u_t,t\{u>0\}\big) = t^{-2}\int_{\{u>0\}}|\nabla u|^2\dd x + qt^2\nl(u,u),
	\]
and this is smaller than $\widetilde{\mathcal{E}}_q(u,\{u>0\})$ whenever $qt^2\nl(u,u)<\int_{\{u>0\}}|\nabla u|^2\dd x$. Since $\{u>0\}$ is a small radial graph, the factor $t$ which restores the volume $|B_1|$ is close to one, while $\int_{\{u>0\}}|\nabla u|^2\dd x\geq\pi^2$ by the Faber--Krahn inequality and $\nl(u,u)\leq C$ by the Hardy--Littlewood--Sobolev inequality on the bounded support. Hence, for small $q$, the dilation strictly decreases the energy unless $|\{u>0\}|=|B_1|$, and $\{u>0\}$ coincides with $\Om$ up to a null set. The remainder of the proof of Theorem~\ref{thm:global} applies with the positive ball state $u_q$, with the Lipschitz bound $|\nl(v,v)-\nl(u,u)|\leq C\norm{v-u}_2$ on bounded sets of $H^1$ with common bounded support in place of \eqref{eq:D-lip}, and with the simplicity of the first eigenvalue in place of Lemma~\ref{lem:ball}(i).
\end{proof}

\medskip

\begin{remark}[Scaling of the quadratic model]
For $u_R(x)\defeq R^{-3/2}u(x/R)$ one has $\nl(u_R,u_R)=R^2\nl(u,u)$, hence
	\[
		\widetilde{\mathcal{E}}_q(R\Om) = R^{-2}\,\widetilde{\mathcal{E}}_{qR^4}(\Om),
	\]
whereas the quartic scaling is $\en(R\Om)=R^{-2}\mathcal{E}_{qR}(\Om)$. Thus the conclusions of Corollary~\ref{cor:quadratic} at radius $R$ hold under the condition $qR^4\leq q_0$, with the local deficit multiplied by $R^{-2}$. This is the reduction $q=2(m/|B_1|)^{4/3}$ of \cite{MMR26H}.
\end{remark}

\bigskip

\subsection*{Acknowledgments} This project was initiated in 2024 during the author’s stay at LMU Munich as a Humboldt Research Fellow. The problem was suggested by Rupert Frank, and the approach taken here was shaped by many discussions with him; the author is deeply indebted to him for his generosity with his time and ideas. The author also thanks the Mathematical Institute of LMU Munich for its hospitality and the Alexander von Humboldt Foundation for its support. This work was partially supported by the Simons Foundation (MPS-TSM-00851065) and by NSF grants DMS-2306962 and DMS-2607148.

%%%%%%%%%%%%%%%%%%%%%%%%%%%%%%%%%%%%%%%%%%%%%%%%%%%%%%%%%%%%%%%%%%%%%%%%%%%%%%
%%%%%%%%%%%%%%%%%%%%%%%%%%%%%%%%%%%%%%%%%%%%%%%%%%%%%%%%%%%%%%%%%%%%%%%%%%%%%%

\end{document}